\documentclass[12pt,letterpaper]{article}
\usepackage{amsthm,amssymb}
\usepackage{amsfonts}
\makeatletter
 \oddsidemargin 0.3cm \evensidemargin 0.3cm
 \marginparwidth 40pt \marginparsep 10pt \topmargin 0.5cm
 \headsep1pt
 \headheight 0pt
 \textheight 9.0in
 \textwidth  5.8in
 \sloppy
\brokenpenalty=10000 \hoffset=0.3cm \setlength{\parindent}{12pt}
\setlength{\parskip}{6pt}
\usepackage[dvips]{color}
\usepackage{colordvi,multicol}

\newtheorem{thm}{\textbf Theorem}[section]
\newtheorem{lem}[thm]{\textbf Lemma}
\newtheorem{rem}[thm]{\textbf Remark}

\newtheorem{prop}[thm]{\textbf Proposition}
\newtheorem{defin}[thm]{\textbf Definition}
\newtheorem{assum}[thm]{\textbf Assumption}

\newtheorem{exa}[thm]{\textbf Example}
\newcommand{\be}{\begin{eqnarray*}}
\newcommand{\ee}{\end{eqnarray*}}



\begin{document}

\title{\bf Stochastic Calculus for Markov Processes Associated with Semi-Dirichlet Forms}
  \author{
Chuan-Zhong Chen\\
School of Mathematics and Statistics\\
Hainan Normal University\\
Haikou, 571158, China\\
ccz0082@aliyun.com\\
\\
Li Ma\\
School of Mathematics and Statistics\\
Hainan Normal University\\
Haikou, 571158, China\\
 lma5140377@gmail.com\\
\\
Wei Sun
            \\ Department of Mathematics and Statistics\\
             Concordia University\\
             Montreal, H3G 1M8, Canada\\
             wei.sun@concordia.ca}
   \date{}
\maketitle

\begin{abstract}
\noindent Let $(\mathcal{E},D(\mathcal{E}))$  be a quasi-regular
semi-Dirichlet form and $(X_t)_{t\geq0}$ be the associated Markov
process. For $u\in D(\mathcal{E})_{loc}$, denote $A_t^{[u]}:=
\tilde{u}(X_{t})-\tilde{u}(X_{0})$ and $F^{[u]}_t:=\sum_{0<s\leq
t}(\tilde u(X_{s})-\tilde u(X_{s-}))1_{\{|\tilde u(X_{s})-\tilde
u(X_{s-})|>1\}}$, where $\tilde{u}$ is a quasi-continuous version
of $u$. We show that there exist a unique locally square
integrable martingale additive functional $Y^{[u]}$ and a unique
continuous local additive functional $Z^{[u]}$ of zero quadratic
variation such that
$$
A_t^{[u]}=Y_t^{[u]}+Z_t^{[u]}+F_t^{[u]}.
$$
Further, we define the stochastic integral $\int_0^t\tilde
v(X_{s-})dA_s^{[u]}$ for $v\in D(\mathcal{E})_{loc}$ and derive
the related It\^{o}'s formula.

\vskip 0.1cm \noindent {\bf Keywords} Semi-Dirichlet form,
Fukushima type decomposition, zero quadratic variation process,
Nakao's integral, It\^{o}'s formula. \vskip 0.5cm \noindent
\textbf{Mathematics Subject Classification (2000)} 31C25, 60J25
\end{abstract}
\section[short title]{Introduction}

\noindent Let $({\cal E},D({\cal E}))$ be a quasi-regular
semi-Dirichlet form on $L^2(E;m)$ with associated Markov process
$((X_t)_{t\ge 0}, (P_x)_{x\in E_{\Delta}})$.  For $u\in
{D(\mathcal{E})}_{loc}$, we denote the additive functional (AF in
short) $A^{[u]}$ by
$$
A_t^{[u]}:= \tilde{u}(X_{t})-\tilde{u}(X_{0}), $$ where
$\tilde{u}$ is an ${\cal E}$-quasi-continuous $m$-version of $u$.
In this paper, we will establish a Fukushima type decomposition
for $A^{[u]}$ and study the stochastic integral $\int_0^t\tilde
v(X_{s-})dA_s^{[u]}$ for $v\in D(\mathcal{E})_{loc}$. We refer the
reader to \cite{Fu94}, \cite{MR92}, \cite{MOR} and the next
section for notations and terminologies of this paper.

The celebrated Fukushima's decomposition was originally
established for regular symmetric Dirichlet forms (see \cite{Fu79}
and \cite[Theorem 5.2.2]{Fu94}) and then extended to the
non-symmetric and quasi-regular cases (cf. \cite[Theorem
5.1.3]{oshima} and \cite[Theorem VI.2.5]{MR92}).  If $({\cal
E},D({\cal E}))$ is a quasi-regular Dirichlet form and $u\in
D({\cal E})$, Fukushima's decomposition tells us that there exist
a unique martingale AF (MAF in short) $M^{[u]}$ of finite energy
and a unique continuous AF  (CAF in short) $N^{[u]}$ of zero
energy such that
\begin{equation}\label{Doob}
\tilde{u}(X_t)-\tilde{u}(X_0)=M^{[u]}_t+N^{[u]}_t.
\end{equation}

If $({\cal E},D({\cal E}))$ is a strong local symmetric Dirichlet
form, Fukushima's decomposition (\ref{Doob}) holds  also for $u\in
{D(\mathcal{E})}_{loc}$ with $M^{[u]}$ being a MAF locally of
finite energy and $N^{[u]}$ being a CAF locally of zero energy
(cf. \cite[Theorem 5.5.1]{Fu94}). For a general symmetric
Dirichlet form $({\cal E},D({\cal E}))$, Kuwae showed that the
Fukushima type decomposition holds for a subclass of $
{D(\mathcal{E})}_{loc}$ (see \cite[Theorem 4.2]{kuwae2010}). If
$({\cal E},D({\cal E}))$ is a (not necessarily symmetric)
Dirichlet form, for $u\in {D(\mathcal{E})}_{loc}$, Walsh showed in
\cite{Alexander 2012,Alexander 2013} that there exist a MAF
$W^{[u]}$ locally of finite energy and a CAF $C^{[u]}$ locally of
zero energy such that
\begin{equation}\label{Wa1}
A_t^{[u]}=W_t^{[u]}+C_t^{[u]}+V_t^{[u]},\end{equation} where
$$
V^{[u]}_t:=\sum_{0<s\leq t}(\tilde u(X_{s})-\tilde
u(X_{s-}))1_{\{|\tilde u(X_{s})-\tilde
u(X_{s-})|>1\}}1_{\{t<\zeta\}}-u(X_{\zeta-})1_{\{t\ge\zeta\}}.
$$
Hereafter $\zeta$ denotes the lifetime of $X$.

If $({\cal E},D({\cal E}))$ is only a semi-Dirichlet form, the
situation becomes more complicated. Note that the assumption of
the existence of dual Markov process plays a crucial role in
Fukushima's decomposition. In fact, without that assumption, the
usual definition of energy of AFs is questionable. If $({\cal
E},D({\cal E}))$ is a quasi-regular local semi-Dirichlet form, Ma
et al. showed in \cite{MMS} that Fukushima's decomposition holds
for $u\in {D(\mathcal{E})}_{loc}$. For a general regular
semi-Dirichlet form, Oshima showed in \cite{oshima13} that
Fukushima's decomposition holds for $u\in {D(\mathcal{E})}_b$.

Let $({\cal E},D({\cal E}))$ be a quasi-regular semi-Dirichlet
form. We define $I(\zeta):=[\![0,\zeta[\![\cup[\![\zeta_i]\!]$,
with $\zeta_i$ being the totally inaccessible part of $\zeta$.
Denote by $J$ the jumping measure of $({\cal E},D({\cal E}))$. For
$u\in {D(\mathcal{E})}_{loc}$, Z.M. Ma et al. showed in
\cite[Theorem 1.4]{MSW14} (cf. also \cite{W}) that the following
two assertions are equivalent to each other.

(i) $u$ admits a Fukushima type decomposition. That is, there
exist a locally square integrable MAF $M^{[u]}$ on $I(\zeta)$ and
a local CAF $N^{[u]}$ on $I(\zeta)$ which has zero quadratic
variation such that (\ref{Doob}) holds.

(ii) $u$ satisfies
\begin{eqnarray*}
(S):\  \  \  \mu_u(dx):=\int_E(\tilde u(x)-\tilde u(y))^2J(dy,dx)\
\mbox{is a smooth measure}.
\end{eqnarray*}
Moreover, if $u$ satisfies Condition (S), then the decomposition
(\ref{Doob}) is unique up to the equivalence of local AFs.

In the first part of this paper, we will establish a new Fukushima
type decomposition for $u\in {D(\mathcal{E})}_{loc}$ without
Condition (S). Denote
\begin{equation}\label{V}
F^{[u]}_t:=\sum_{0<s\leq t}(\tilde u(X_{s})-\tilde
u(X_{s-}))1_{\{|\tilde u(X_{s})-\tilde
u(X_{s-})|>1\}}.\end{equation} In Section 2 (see Theorem
\ref{thm3.2} below), we will show that, for any $u\in
{D(\mathcal{E})}_{loc}$, there exist a unique locally square
integrable MAF $Y^{[u]}$ on $I(\zeta)$ and a unique continuous
local AF $Z^{[u]}$ which has zero quadratic variation such that
\begin{eqnarray}\label{semi1}
A_t^{[u]}=Y_t^{[u]}+Z_t^{[u]}+F_t^{[u]}.
\end{eqnarray}
The decomposition (\ref{semi1}) gives the most general form of the
Fukushima type decomposition in the framework of semi-Dirichlet
forms. It implies in particular that $A^{[u]}$ is a Dirichlet
process (cf. \cite{F1,F2}), i.e., summation of a semi-martingale
and a zero quadratic variation process.

In the second part of this paper, we will define the stochastic
integral $\int_0^t\tilde v(X_{s-})dA_s^{[u]}$ for $u,v\in
D(\mathcal{E})_{loc}$ and derive the related It\^{o}'s formula.

Let $({\cal E},D({\cal E}))$ be a regular symmetric Dirichlet
form. For $u\in D({\cal E})$ and $v\in D({\cal E})_b$, Nakao
studied in \cite{N} the stochastic integral $\int_0^t\tilde
v(X_{s-})dA_s^{[u]}$ by introducing the now called Nakao's
integral $\int_0^t\tilde v(X_{s-})dN_s^{[u]}$. Later, Z.Q. Chen
et. al and Kuwae (see \cite {CFKZ} and \cite{kuwae2010}) extended
Nakao's integral to a larger class of integrators as well as
integrands. By using different methods, Walsh (\cite{Alexander
2011}) and C.Z. Chen et al. (\cite{CMS12}) independently extended
Nakao's integral from the setting of symmetric Dirichlet forms to
that of non-symmetric Dirichlet forms. By virtue of the
decomposition (\ref{Wa1}), Walsh also defined Nakao's integral for
more general integrators as well as integrands in the setting of
non-symmetric Dirichlet forms (see \cite{Alexander 2013}). In all
of these references, the related It\^{o}'s formulas have been
derived for the stochastic integral $\int_0^t\tilde
v(X_{s-})dA_s^{[u]}$.

In Section 3, we will  define the stochastic integral
$\int_0^t\tilde v(X_{s-})dA_s^{[u]}$ for $u,v\in
D(\mathcal{E})_{loc}$ and derive the related It\^{o}'s formula in
the setting of semi-Dirichlet forms. Due to the non-Markovian
property of the dual form, all the previous known methods in
defining Nakao's integral ceased to work. Note that if $({\cal
E},D({\cal E}))$ is only a semi-Dirichlet form, its symmetric part
is a symmetric positivity preserving form but in general not a
symmetric Dirichlet form and the dual killing measure might not
exist. These cause extra difficulties in defining Nakao's
integral. In this paper, we will combine the method of
\cite{CMS12} with the localization technique of \cite{MMS} and
\cite{MSW14} to define the stochastic integral $\int_0^t\tilde
v(X_{s-})dA_s^{[u]}$ and derive the related It\^{o}'s formula.

In Section 4, we will give concrete examples of semi-Dirichlet
forms for which our results can be applied.

\section [short title]{Decomposition of ${\tilde u(X_t)-\tilde u(X_0)}$ without Condition (S)}\label{Sec: Decomposition}
\setcounter{equation}{0}

The basic setting of this paper is the same as that in
\cite{MSW14}. We refer the reader to \cite{MSW14} for more
details. Let $E$ be a metrizable Lusin space and $m$ be a
$\sigma$-finite positive measure on its Borel $\sigma$-algebra
${\cal B}(E)$. We consider a quasi-regular semi-Dirichlet form
$({\cal E},D({\cal E}))$ on $L^2(E;m)$. Denote by
$(T_{t})_{t\geq0}$ and $(G_{\alpha})_{\alpha\geq0}$ (resp.
$(\hat{T}_{t})_{t\geq0}$ and $(\hat{G}_{\alpha})_{\alpha\geq0}$)
the semigroup and resolvent (resp. co-semigroup and co-resolvent)
associated with $({\cal E},D({\cal E}))$. Let ${\bf
M}=(\Omega,{\cal F},({\cal F}_t)_{t\ge 0}, (X_t)_{t\ge
0},(P_x)_{x\in E_{\Delta}})$ be an $m$-tight special standard
process which is properly associated with $({\cal E},D({\cal
E}))$.

Throughout this paper, we fix a function $\phi\in L^{1}(E;m)$ with
 $0<\phi\leq1$ $m{\textrm{-}}$a.e. and set $h=G_{1}\phi$,
 $\hat{h}=\hat{G}_{1}\phi$.  Denote $\tau_B:=\inf\{t>0\,|\, X_t\notin B\}$ for $B\subset E$.
Let $V$ be a quasi-open subset of $E$. We denote by
$X^V=(X^V_t)_{t\ge 0}$ the part process of $X$ on $V$ and denote
by $(\mathcal{E}^{{V}},D(\mathcal{E}^{{V}}))$ the part form of
$(\mathcal{E},D(\mathcal{E}))$ on $L^2(V;m)$. It is known that
$X^V$ is a standard process,
$D(\mathcal{E}^{{V}})=D(\mathcal{E})_{{V}}=\{u\in
D(\mathcal{E})\,|\,\tilde u=0,\ {\cal E}{\textrm{-}{\rm q.e.}}\
{\rm on}\ V^c\}$, and $(\mathcal{E}^{{V}},D(\mathcal{E})_{{V}})$
is a quasi-regular semi-Dirichlet form (cf. \cite{kuwae}). Denote
by $({T}_{t}^{V})_{t\ge 0}$, $(\hat{T}_{t}^{V})_{t\ge 0}$,
$({G}_{\alpha}^{V})_{\alpha\ge 0}$ and
$(\hat{G}_{\alpha}^{V})_{\alpha\ge 0}$ the semigroup,
co-semigroup, resolvent and co-resolvent associated with
$(\mathcal{E}^{{V}},D(\mathcal{E})_{{V}})$, respectively. Define
$\bar{h}^V:=\hat{G}^{{V}}_{1}\phi$ and
$\bar{h}^{V,*}:=e^{-2}\hat{T}_{1}^{V}(\hat{G}^{V}_{2}\phi)$. Then
$\bar{h}^V,\bar{h}^{V,*}\in D(\mathcal{E})_{{V}}$ and
$\bar{h}^{V,*}\le \bar{h}^{V}$. Denote
$D(\mathcal{E})_{{V},b}:={\cal B}_b(E)\cap D(\mathcal{E})_{{V}}$.

 For an AF $A=(A_t)_{t\ge 0}$ of $X^V$, we define
$$
e^V(A):=\lim_{t\downarrow0}{1\over{2t}}E_{\bar{h}^V\cdot
m}(A_{t}^2)
$$
whenever the limit exists in $[0,\infty]$. For a local AF
$B=(B_t)_{t\ge 0}$  of $X$, we define
$$e^{V,*}(B):=\lim_{t\downarrow0}{1\over{2t}}E_{\bar{h}^{V,*}\cdot
m}(B_{t\wedge\tau_V}^2)
$$
whenever the limit exists in $[0,\infty]$.

Define
\begin{eqnarray*}
\dot{\mathcal{M}}^V
 &:=&\{M\,|\, M\ \mbox{is an AF of}\ X^{V},\ E_x(M^{2}_t)<\infty, E_x(M_t)=0\\
 &&\ \ \ \ \mbox{for}\ {\rm all}\ t\ge0\ {\rm and}\ {\cal
E}{\textrm{-}{\rm q.e.}}\ x\in V, e^V(M)<\infty\},
 \end{eqnarray*}
  \begin{eqnarray*}
 \mathcal{N}^V_{c}
 &:=&\{N\,|\, N\ \mbox{is a CAF of}\ X^{V},E_x(|N_t|)<\infty\ \mbox{for}\ {\rm all}\ t\ge0\\
   &&\ \ \ \ {\rm and}\ {\cal
E}{\textrm{-}{\rm q.e.}}\ x\in V, e^V(N)=0\},
   \end{eqnarray*}
   \begin{eqnarray*}
\Theta&:=&\{\{{V_n}\}\,|\, {V_n}\ \mbox{is}\ {\cal
E}{\textrm{-}}\mbox{quasi} {\textrm{-}}\mbox{open},\ {V_n}\subset
V_{n+1}\ {\cal E}
{\textrm{-}{\rm q.e.}} \\
         &&\ \ \ \ \ \ \ \ \ \ \ \forall~n\in \mathbb{N},\
         \mbox{and}\ E=\cup_{n=1}^{\infty}{V_n}\ {\cal E}{\textrm{-}{\rm q.e.}}\},
\end{eqnarray*}
\begin{eqnarray*}
{D(\mathcal{E})}_{loc}
&:=&\{u\,|\,\exists\ \{V_n\}\in\Theta\ \mbox{and}\ \{u_n\}\subset D(\mathcal{E})\nonumber\\
  &&\ \ \ \ \ \ \ \mbox{such that }\ u=u_n\ m{\textrm{-}{\rm a.e.}}\ \mbox{on}\ V_n,
  ~\forall~n\in \mathbb{N}\},
\end{eqnarray*}
\begin{eqnarray*}
\dot{\mathcal{M}}_{loc}
 &:=&\{M\,|\, M\ \mbox{is a local AF of}\ {\bf M},\ \exists\ \{V_n\},\{E_n\}\in\Theta\ {\rm and}\ \{M^n\,|\,M^n\in\dot{\mathcal{M}}^{V_n}\}\\
 &&\ \ \ \ \ \ \ \ \mbox{such that}\ E_n\subset V_n,\ M_{t\wedge\tau_{E_n}}=M^{n}_{t\wedge\tau_{E_n}},\ t\ge0,\ n\in\mathbb{N} \}
 \end{eqnarray*}
 and
 \begin{eqnarray*}
{\mathcal{L}}_{c}&:=&\{N\,|\, N\ \mbox{is a local AF of}\ {\bf M}\ ,\ \exists\ \{E_n\}\in\Theta\ \mbox{such that}\ t\rightarrow N_{t\wedge\tau_{E_n}}\\
&&\ \ \ \ \ \ \mbox{  is continuous and of zero quadratic
variation},\  n\in\mathbb{N} \}.
 \end{eqnarray*}
In the above definition, $\{N_{t\wedge\tau_{E_n}}\}$ is said to be
of zero
  quadratic variation if its quadratic variation vanishes in $P_{m}$-measure, more precisely, if it satisfies
$$
\sum_{k=0}^{[T/\varepsilon_l]}(N_{\{(k+1)\varepsilon_l\}\wedge\tau_{E_n}}-N_{\{k\varepsilon_l\}\wedge\tau_{E_n}})^2\rightarrow
0\ {\rm as}\ l\rightarrow\infty\ {\rm in}\ P_{m}{\textrm{-}}{\rm
measure},
$$
 for any $T>0$ and any sequence $\{\varepsilon_l\}_{l\in\mathbb{N}}$ converging to
 0.

We use $\zeta_i$ to denote the totally inaccessible part of
$\zeta$, by which we  mean that $\zeta_i$ is an $\{{\cal
F}_t\}$-stopping time and is the totally inaccessible part of
$\zeta$ w.r.t. $P_{x}$ for ${\cal E}{\textrm{-}{\rm q.e.}}\ x\in
E$. By \cite[Proposition 2.4]{MSW14}, such $\zeta_i$ exists and is
unique in the sense of $P_{x}{\textrm{-}{\rm a.s.}}$ for ${\cal
E}{\textrm{-}{\rm q.e.}}\ x\in E$. We write
$I(\zeta):=[\![0,\zeta[\![\cup[\![\zeta_i]\!]$. By
\cite[Proposition 2.4]{MSW14}, there exists a $\{V_n\}\in\Theta$
such that for any $\{U_n\}\in\Theta$, $I(\zeta)=\cup_n
[\![0,\tau_{V_n\cap U_n}]\!]$ $ P_{x}{\textrm{-}{\rm a.s.}}\ \
{\rm for}\ {\cal E}{\textrm{-}{\rm q.e.}}\ x\in E$. Therefore
$I(\zeta)$ is a predictable set of interval type
 (cf. \cite[Theorem 8.18]{HWY}). By the local compactification method (see \cite[Theorem VI.1.6]{MR92} and \cite[Theorem
3.5]{HC06}) in the semi-Dirichlet forms setting, we may assume
without loss of generality that $(X_t)_{t\ge 0}$ is a Hunt process
and $E$ is a locally compact separable metric space whenever
necessary.

 In this paper a local AF $M$ is called a locally square integrable MAF on $I(\zeta)$,
 denoted by $M\in {\mathcal{M}}^{I(\zeta)}_{loc},$  if $M\in ({\mathcal{M}}^2_{loc})^{I(\zeta)}$ in the sense of \cite[Definition 8.19]{HWY}.  For $u\in
{D(\mathcal{E})}_{loc}$, we define the bounded variation process
$F^{[u]}$ as in (\ref{V}). Denote by $J(dx,dy)$ and $K(dx)$ the
jumping and killing measures of $({\cal E},D({\cal E}))$,
respectively (cf. \cite{HC06}). Let $(N(x,dy),H_s)$ be a L\'{e}vy
system of $X$ and $\mu_H$ be the Revuz measure of the positive ACF
(PCAF in short) $H$. Then we have
$J(dy,dx)=\frac{1}{2}N(x,dy)\mu_H(dx)$ and
$K(dx)=N(x,\Delta)\mu_H(dx)$. Define (cf. \cite[Theorem 5.3]{MMS})
$$
\hat{S}^{*}_{00}:=\{\mu\in S_{0}\,|\,\hat{U}_{1}\mu\leq
c\hat{G}_{1}\phi\ \mbox{for some constant}\ \ c>0\},
$$
where $S_0$ denotes the family of positive measures of finite
energy integral and $\hat{U}_{1}\mu$ is the 1-co-potential.

We put the following assumption:
\begin{assum}\label{assum100}
There exist $\{{V_n}\}\in \Theta$ and locally bounded function
$\{C_n\}$ on $\mathbb{R}$ such that for each $n\in\mathbb{N}$, if
$u,v\in D(\mathcal{E})_{{V_n},b}$ then $uv\in D(\mathcal{E})$ and
\begin{eqnarray*}
\mathcal{E}(uv,uv)\leq C_n(\|u\|_{\infty}+\|v\|_{\infty})({\cal
E}_1(u,u)+{\cal E}_1(v,v)).
\end{eqnarray*}
\end{assum}

Now we can state the main result of this section.
\begin{thm}\label{thm3.2} Let
$(\mathcal{E},D(\mathcal{E}))$ be a quasi-regular semi-Dirichlet
form on $L^{2}(E;m)$ satisfying Assumption \ref{assum100}. Suppose
$u\in {D(\mathcal{E})}_{loc}$. Then,

(i) There exist $Y^{[u]}\in{\mathcal{M}}^{I(\zeta)}_{loc}$ and
$Z^{[u]}\in {\mathcal{L}}_{c}$ such that
\begin{eqnarray}\label{new3}
\tilde{u}(X_{t})-\tilde{u}(X_{0})=Y^{[u]}_{t}+Z^{[u]}_{t}+F^{[u]}_t,\
\ t\ge0,\ \ P_{x}{\textrm{-}a.s.}\ \ {\rm for}\ {\cal
E}{\textrm{-}q.e.}\ x\in E.
\end{eqnarray}
The decomposition (\ref{new3}) is unique up to the equivalence of
local AFs and the continuous part of $M^{[u]}$ belongs to
$\dot{\mathcal{M}}_{loc}$.

(ii)  There exists an $\{E_n\}\in \Theta$ such that for
$n\in\mathbb{N}$, $\{Y^{[u]}_{t\wedge\tau_{E_n}}\}$ is a
$P_{x}$-square-integrable martingale for ${\cal E}$-q.e. $x\in E$,
$e^{E_n,*}(Y^{[u]})<\infty$;
$E_x[(Z^{[u]}_{t\wedge\tau_{E_n}})^2]<\infty$ for $t\ge0$, ${\cal
E}{\textrm{-}}$q.e. $ x\in E$, $e^{E_n,*}(Z^{[u]})=0$.
\end{thm}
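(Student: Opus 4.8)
The plan is to reduce the global statement to a sequence of localized problems on the quasi-open sets $V_n$ from Assumption \ref{assum100}, solve the decomposition on each part process $X^{V_n}$, and then glue the local pieces together consistently on $I(\zeta)$. The main reference is \cite{MSW14}, whose Theorem 1.4 already gives the decomposition under Condition (S); the novelty here is to remove (S) by absorbing the large jumps into the explicit bounded-variation term $F^{[u]}$.

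First I would recall from Section 2 that for $u\in D(\mathcal{E})_{loc}$ there exist $\{V_n\}\in\Theta$ and $\{u_n\}\subset D(\mathcal{E})$ with $u=u_n$ $m$-a.e. on $V_n$. Fix $n$ and work with the part form $(\mathcal{E}^{V_n},D(\mathcal{E})_{V_n})$, which is itself a quasi-regular semi-Dirichlet form. The key observation is that after subtracting the large-jump functional $F^{[u]}$, the remaining increment $\tilde u(X_t)-\tilde u(X_0)-F^{[u]}_t$ has only small jumps (bounded by $1$), so the associated jump measure is automatically controlled and Condition (S) is no longer needed. I would verify that on each $V_n$, the truncated function satisfies the smoothness requirement of \cite[Theorem 1.4]{MSW14} locally, yielding a locally square integrable MAF $Y^{[u],n}\in\mathcal{M}^{V_n}_{loc}$ and a zero-quadratic-variation CAF $Z^{[u],n}$ with
\begin{eqnarray*}
\tilde{u_n}(X_{t\wedge\tau_{V_n}})-\tilde{u_n}(X_0)=Y^{[u],n}_{t\wedge\tau_{V_n}}+Z^{[u],n}_{t\wedge\tau_{V_n}}+F^{[u_n]}_{t\wedge\tau_{V_n}}.
\end{eqnarray*}
Assumption \ref{assum100} enters precisely here: the bound on $\mathcal{E}(uv,uv)$ guarantees that products of bounded elements of $D(\mathcal{E})_{V_n,b}$ stay in $D(\mathcal{E})$, which is what makes the quadratic-variation (energy) estimates of the martingale part finite and lets one run the standard Fukushima machinery on each part.

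Next I would establish consistency: since $u_m=u_n$ $m$-a.e. on $V_n\cap V_m$, the uniqueness part of the localized decomposition forces $Y^{[u],n}$ and $Y^{[u],m}$ (respectively the $Z$ pieces) to agree up to $\tau_{V_n\cap V_m}$, $P_x$-a.s. for $\mathcal{E}$-q.e. $x$. This compatibility allows me to define global objects $Y^{[u]}$ and $Z^{[u]}$ by setting $Y^{[u]}_{t\wedge\tau_{E_n}}=Y^{[u],n}_{t\wedge\tau_{E_n}}$ for a suitable increasing sequence $\{E_n\}\in\Theta$ with $E_n\subset V_n$, so that $Y^{[u]}\in\mathcal{M}^{I(\zeta)}_{loc}$ and $Z^{[u]}\in\mathcal{L}_c$. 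The identification of the time interval as $I(\zeta)$ comes from the characterization $I(\zeta)=\cup_n[\![0,\tau_{V_n\cap U_n}]\!]$ recalled in Section 2, together with the fact that $\zeta_i$ is the totally inaccessible part of the lifetime; the large jumps at the totally inaccessible part of $\zeta$ are exactly what $F^{[u]}$ records, which is why the interval extends to include $[\![\zeta_i]\!]$ rather than stopping short at $[\![0,\zeta[\![$. Part (ii), the square-integrability and finiteness of $e^{E_n,*}$, follows by reading off the corresponding estimates from the localized construction on each $E_n$, using the comparison $\bar h^{V,*}\le\bar h^V$ to pass between the two energy functionals $e^V$ and $e^{V,*}$.

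The hard part will be the gluing/uniqueness step rather than the construction on a single part. Because the dual of a semi-Dirichlet form need not be Markovian, I cannot freely use time-reversal or dual additive functionals to identify the zero-quadratic-variation part; instead the uniqueness must be argued purely forward, via the stopping-time consistency of local AFs and the fact that a continuous local AF of zero quadratic variation that is also a local martingale must vanish. Verifying that the candidate $Z^{[u]}$ genuinely has zero quadratic variation globally (not just on each $E_n$) and that the decomposition is unique up to equivalence of local AFs will require care with the predictable-set-of-interval-type structure of $I(\zeta)$ from \cite[Theorem 8.18]{HWY} and \cite[Definition 8.19]{HWY}. I would expect the bulk of the technical work to lie in checking that the localized energy estimates assemble into the claimed global membership $Y^{[u]}\in\mathcal{M}^{I(\zeta)}_{loc}$ and that the continuous martingale part lands in $\dot{\mathcal{M}}_{loc}$.
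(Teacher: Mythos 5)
Your overall architecture (localize to the $V_n$, invoke the part-process decomposition, glue, prove uniqueness via a zero-quadratic-variation martingale argument) matches the paper's skeleton, and your uniqueness sketch is essentially the paper's: the difference of two decompositions is a locally square integrable martingale of zero quadratic variation, hence has vanishing bracket and vanishes. However, there are two genuine gaps. First, your claim that after subtracting $F^{[u]}$ ``the associated jump measure is automatically controlled and Condition (S) is no longer needed'' is false as stated: the smallness of the jumps bounds the integrand of $\int(\tilde u(x)-\tilde u(y))^2 1_{\{|\tilde u(x)-\tilde u(y)|\le 1\}}J(dy,dx)$ by $1$, but $J$ may have infinite mass near the diagonal, so nothing is automatic. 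This is exactly what the paper's Lemma \ref{lem2.3} supplies: one must splice in the cutoff $f_n$, pass to the bounded functions $u_nf_n\in D(\mathcal{E})_{V_n,b}$ (which satisfy (S) by \cite[Proposition 1.8]{MSW14}), and run the algebraic splitting that bounds the small-jump square functional by the finite jump integrals of $f_n$ and $u_nf_n$. Without this estimate your construction of the purely discontinuous part has no convergence mechanism.

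Second, and more seriously, your gluing step for the martingale part fails at exit times. The part-process decompositions concern $u_nf_n(X^{V_n})$, and at $\tau_{E_n}$ the process typically jumps to a point where $u_nf_n\ne\tilde u$, so the jump of $u_nf_n(X)$ at $\tau_{E_n}$ differs from the jump of $\tilde u(X)$ there; consequently the local discontinuous martingale pieces $Y^{[u],n}$ you propose are \emph{not} consistent across $n$, and uniqueness on $V_n\cap V_m$ cannot force agreement of objects built from different jump functionals. The paper avoids this: only the \emph{continuous} martingale parts are glued (via \cite[Lemma 1.14]{MSW14}), while the purely discontinuous part $Y^{[u],d}$ is constructed globally as the limit of the compensated truncated jump sums $Y^l$ of $\tilde u(X)$ itself, with convergence guaranteed by Lemma \ref{lem2.3}. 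The mismatch at $\tau_{E_n}$ then reappears in the verification that $Z^{[u]}$ has zero quadratic variation, where the paper introduces the correction process $B_t$ (the difference of the two exit-time jumps) and its dual predictable projection $B^p_t$ in (\ref{newasd}), identifying a purely discontinuous martingale with no jumps that must vanish. Your proposal contains no analogue of $B_t$, and without it the identification of $Z^{[u]}$ as continuous with zero quadratic variation does not go through.
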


A Fukushima type decomposition for $A^{[u]}$ has been established
in \cite{MSW14} under Condition (S). Below we will follow the
argument of \cite{MSW14} to establish the decomposition for
$A^{[u]}-F^{[u]}$ without assuming Condition (S). Before proving
Theorem \ref{thm3.2}, we prepare some lemmas.

We fix a $\{{V_n}\}\in\Theta$ satisfying Assumption
\ref{assum100}. Without loss of generality, we assume that
$\widetilde{\hat{h}}$ is bounded on each ${V_n}$, otherwise we may
replace ${V_n}$ by ${V_n}\cap\{\widetilde{\hat{h}}< n \}$. Since
$\bar{h}^{V_n}=\hat{G}_{1}^{{V_n}}\phi\leq
\hat{G}_{1}\phi=\hat{h}$, $\bar{h}^{V_n}$ is bounded on ${V_n}$.
To simplify notations, we write $${\bar h}_n := {\bar h}^{V_n}.$$

\begin{lem}\label{thm3.6}(\cite[Lemma
1.12]{MSW14}) Let $u\in D(\mathcal{E})_{V_n,b}$. Then there exist
unique $M^{n,[u]}\in\dot{\mathcal{M}}^{V_n}$ and $N^{n,[u]}\in
\mathcal{N}^{V_n}_{c}$ such that ${\rm for}\ {\cal
E}{\textrm{-}q.e.}\ x\in V_n$,
\begin{equation}\label{sdf}
\tilde{u}(X^{{V_n}}_{t})-\tilde{u}(X^{{V_n}}_{0})=M^{n,[u]}_{t}+N^{n,[u]}_{t},\
\ t\ge 0,\ \ P_{x}{\textrm{-}a.s.}
\end{equation}
\end{lem}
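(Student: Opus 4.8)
The plan is to run the classical Fukushima decomposition on the part process $X^{V_n}$, but to replace the dual-process energy by the functional $e^{V_n}$ attached to the co-potential weight $\bar h_n=\hat G^{V_n}_1\phi$. Since $u$ is bounded, the first reduction is to resolvent potentials: I set $u_\beta:=\beta G^{V_n}_\beta u=G^{V_n}_\beta(\beta u)$, so that $u_\beta$ is the $\beta$-potential of the bounded function $\beta u$ and, by the standard resolvent approximation for coercive closed forms (semi-Dirichlet forms satisfy the weak sector condition), $u_\beta\to u$ in the form norm $\mathcal{E}_1$ as $\beta\to\infty$, with $\|u_\beta\|_\infty\le\|u\|_\infty$ uniformly.

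For a potential $w=G^{V_n}_\beta g$ with $g$ bounded the decomposition is explicit. Writing $L^{V_n}$ for the generator, one has $L^{V_n}w=\beta w-g$, so Dynkin's formula gives that
$$
M^{[w]}_t:=\tilde w(X^{V_n}_t)-\tilde w(X^{V_n}_0)-\int_0^t(\beta\tilde w-g)(X^{V_n}_s)\,ds
$$
is a martingale additive functional with $E_x(M^{[w]}_t)=0$ and $E_x((M^{[w]}_t)^2)<\infty$ for q.e.\ $x$, while $N^{[w]}_t:=\int_0^t(\beta\tilde w-g)(X^{V_n}_s)\,ds$ is a continuous additive functional of bounded variation. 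Since $|N^{[w]}_t|\le Ct$ one gets $\frac{1}{2t}E_{\bar h_n\cdot m}((N^{[w]}_t)^2)\le C't\to0$, so $e^{V_n}(N^{[w]})=0$ and $N^{[w]}\in\mathcal{N}^{V_n}_c$. It then remains only to check $e^{V_n}(M^{[w]})<\infty$.

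The heart of the matter is the energy estimate $e^{V_n}(M^{[w]})\le C(\|w\|_\infty)\,\mathcal{E}_1(w,w)$ for such potentials. This is exactly where the absence of a dual Markov process bites: in the symmetric theory one identifies $e(M^{[w]})$ with $\frac{1}{2}\mu_{\langle w\rangle}(E)$ and reads the energy measure off $\mathcal{E}(w^2,\cdot)$ and $\mathcal{E}(w\,\cdot,w)$, but no such symmetric identity is at hand. Instead I would compute the energy of $A^{[w]}$ directly---which coincides with $e^{V_n}(M^{[w]})$ because $N^{[w]}$ has zero energy and the cross term drops out by Cauchy--Schwarz---by expanding, for the part semigroup $T^{V_n}_t$,
$$
E_{\bar h_n\cdot m}[(\tilde w(X^{V_n}_t)-\tilde w(X^{V_n}_0))^2]=\langle\hat T^{V_n}_t\bar h_n,w^2\rangle-2\langle\bar h_n,wT^{V_n}_tw\rangle+\langle\bar h_n,w^2\rangle,
$$
then pushing the duality onto the co-semigroup, dividing by $2t$ and letting $t\downarrow0$. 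The one-co-excessivity of $\bar h_n=\hat G^{V_n}_1\phi$ makes $t\mapsto\frac{1}{2t}E_{\bar h_n\cdot m}((M^{[w]}_t)^2)$ behave like a Dirichlet-form expression in $w$ tested against the weight, and Assumption \ref{assum100} is precisely what guarantees that the products $w^2$ and $\bar h_n w$ produced in this expansion lie in $D(\mathcal{E})$ with the control $\mathcal{E}(w^2,w^2)\le C(\|w\|_\infty)\mathcal{E}_1(w,w)$; combining these yields the displayed bound. Turning the martingale energy into a form-controlled quantity without a dual process is the step I expect to be the main obstacle.

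Granting the estimate, the remainder is routine. Applying it to $w=u_\beta-u_\gamma$ (whose sup-norms are uniformly bounded) shows that $\{M^{[u_\beta]}\}$ is $e^{V_n}$-Cauchy, hence converges to some $M^{n,[u]}\in\dot{\mathcal{M}}^{V_n}$; the $L^2$-martingale properties pass to this limit. Setting $N^{n,[u]}_t:=\tilde u(X^{V_n}_t)-\tilde u(X^{V_n}_0)-M^{n,[u]}_t$ and passing to the limit in $N^{[u_\beta]}$ identifies $N^{n,[u]}$ as an element of $\mathcal{N}^{V_n}_c$, continuous (the jumps of $\tilde u(X^{V_n})$ being absorbed into the bounded-jump martingale part) and of zero energy. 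Uniqueness follows from the energy structure alone: if $M+N=M'+N'$ are two such decompositions, then $M-M'=N'-N$ is at once a martingale additive functional of finite energy and a continuous additive functional of zero energy, so it has vanishing quadratic variation and therefore $M=M'$, $N=N'$.
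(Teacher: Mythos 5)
Your proposal is correct and reconstructs essentially the argument behind this lemma, which the paper itself does not prove but quotes from \cite[Lemma 1.12]{MSW14} (building on \cite{MMS}): reduction to resolvent potentials $u_\beta=\beta G^{V_n}_\beta u$, the explicit Dynkin decomposition for potentials, control of the martingale part through the $\bar h_n$-weighted energy $e^{V_n}$ --- with Assumption \ref{assum100} supplying $w^2,\ w\bar h_n\in D(\mathcal{E})$ in the identity $e^{V_n}(A^{[w]})=\mathcal{E}(w,w\bar h_n)-\frac{1}{2}\mathcal{E}(w^2,\bar h_n)$ --- followed by an $e^{V_n}$-Cauchy limiting procedure and uniqueness via the vanishing of weighted energy. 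One cosmetic caveat: the sector condition and Assumption \ref{assum100} actually give a bound of order $\mathcal{E}_1(w,w)^{1/2}$ plus an $L^2$-term rather than your homogeneous $C(\|w\|_\infty)\,\mathcal{E}_1(w,w)$, but since $\|u_\beta-u_\gamma\|_\infty$ stays uniformly bounded this still yields the Cauchy property your limit argument requires.
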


We now fix a $u\in {D(\mathcal{E})}_{loc}$. Then, there exist
$\{V^1_n\}\in \Theta$ and $\{u_n\}\subset D(\mathcal{E})$ such
that $u=u_n$ $m{\textrm{-}{\rm a.e.}}$ on $V^1_n$. By
\cite[Proposition 3.6]{MR95}, we may assume without loss of
generality that  each $u_n$ is ${\cal E}$-quasi-continuous. By
\cite[Proposition 2.16]{MR95}, there exists an $\mathcal{E}$-nest
$\{F_{n}^2\}$ of compact subsets of $E$ such that $\{u_n\}\subset
C\{F_{n}^2\}$. Denote by $V^{2}_{n}$ the finely interior of
$F^2_n$. Then $\{V^{2}_{n}\}\in\Theta$. Denote ${V^3_n}=V_n\cap
V^1_n\cap V^2_{n}$. Then $\{{V^3_n}\}\in\Theta$ and each $u_n$ is
bounded on ${V^3_n}$.

For $n\in\mathbb{N}$, we define $E_{n}=\{x\in
E\,|\,{\widetilde{h_n}}(x)>{1\over n}\}$, where
$h_n:=G_{1}^{{V_n}}\phi$. Then $\{E_{n}\}\in\Theta$ satisfying
$\overline{E}_n^{\mathcal{E}}\subset E_{n+1}\ {\cal
E}{\textrm{-}{\rm q.e.}}$ and $E_{n}\subset {V_n}\ {\cal
E}{\textrm{-}{\rm q.e.}}$ for each $n\in \mathbb{N}$ (cf.
\cite[Lemma 3.8]{kuwae}). Here $\overline{E}_n^{\mathcal{E}}$
denotes the ${\cal E}$-quasi-closure of $E_n$. Define
$f_{n}=n\widetilde{h_n}\wedge1$. Then $f_{n}\in
D(\mathcal{E})_{V_n,b}$, $f_{n}=1$ on $E_{n}$ and $f_{n}=0$ on
$V^c_{n}$. Denote by $Q_n$ the bound of $|u_n|$ on $V^3_n$. By
\cite[(2.1)]{kuwae} and Assumption \ref{assum100}, we find that
$[(-Q_nf_n)\vee u_n\wedge (Q_nf_n)]f_n\in D(\mathcal{E})_{V_n,b}$.
To simplify notations, below we use still $u_n$ to denote
$[(-Q_nf_n)\vee u_n\wedge (Q_nf_n)]$. Then we have $u_n,u_nf_n\in
D(\mathcal{E})_{V_n,b}$, and $u=u_n=u_nf_n$ on $E_n\cap V^3_n$.

Denote by $J^n(dx,dy)$ and $K^n$ the jumping and killing measures
of $({\cal E}^{V_n},D({\cal E}^{V_n}))$, respectively. Let
$(N^n(x,dy),H^n_s)$ be a L\'{e}vy system of $X^{V_n}$ and
$\mu_{H^n}$ be the Revuz measure of $H^n$. Then
$J^n(dy,dx)=\frac{1}{2}N^n(x,dy)\mu_{H^n}(dx)$ and
$K^n(dx)=N^n(x,\Delta)\mu_{H^n}(dx)$. For each $n\in \mathbb{N}$,
since $f_n,u_nf_n\in D(\mathcal{E})_{V_n,b}$, $f_n,u_nf_n$ satisfy
Condition (S) by \cite[Proposition 1.8]{MSW14}.  Hence we may
select a $\{V_{n}^4\}\in\Theta$ such that for each
$n\in\mathbb{N}$, $V_{n}^4\subset V_n$, and
$$
\int_{V^4_n}\int_{V_n}(f_n(x)-f_n(y))^{2}J^n(dy,dx)<\infty,
$$
\begin{equation}\label{C}
\int_{V^4_n}\int_{V_n}((u_nf_n)(x)-(u_nf_n)(y))^{2}J^n(dy,dx)<\infty,
\end{equation}
and $$K^n(V^4_n)<\infty.
$$
To simplify notations, we use still $E_n$ to denote $E_n\cap
V^3_n\cap V^4_{n}$. Then we have $\{E_{n}\}\in\Theta$, $E_n\subset
V_n$, $u_nf_n\in D(\mathcal{E})_{V_n,b}$ and $u=u_nf_n$ on $E_n$
for each $n\in \mathbb{N}$.

\begin{lem}\label{lem2.3}
Let $u\in D(\mathcal{E})_{loc}$. Denote
$$
F^{[u],*}:=\sum_{0<s\leq t}(\tilde u(X_s)-\tilde
u(X_{s-}))^21_{\{|\tilde u(X_s)-\tilde u(X_{s-})|\leq1\}}.
$$
Then, $F^{[u],*}_{t\wedge \tau_{E_n}}$ is integrable w.r.t.
$P_{\nu}:=\int P_x\nu(dx)$ for any $\nu\in \hat{S}^{*}_{00}$
satisfying $\nu(E)<\infty$.
\end{lem}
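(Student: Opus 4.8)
The plan is to control $F^{[u],*}_{t\wedge\tau_{E_n}}$ by comparing it, up to a single boundary jump, with the small-jump quadratic sum of the part process $X^{V_n}$, and then to apply the Lévy system together with the co-potential estimate built into the definition of $\hat{S}^{*}_{00}$. First I would set $w_n:=u_nf_n$, so that $w_n\in D(\mathcal{E})_{V_n,b}$, $w_n=u$ on $E_n$, and $\tilde u=\tilde w_n$ $\mathcal{E}$-q.e. on $E_n$. Writing $\sigma:=t\wedge\tau_{E_n}$ and using $E_n\subset V_n$ together with $X_s=X^{V_n}_s$ for $s\le\tau_{V_n}$ (and $\tau_{E_n}\le\tau_{V_n}$), I would split the sum defining $F^{[u],*}_\sigma$ into the interior jumps, for which $X_{s-},X_s\in E_n$ and hence both values are computed with $\tilde w_n$, and the single jump that takes the process out of $E_n$ at $s=\tau_{E_n}$ (if any). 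Because each summand carries the factor $1_{\{|\cdots|\le1\}}$, the exit jump contributes at most $1$, so
\[
F^{[u],*}_\sigma \le \sum_{0<s\le\sigma}\big(\tilde w_n(X^{V_n}_s)-\tilde w_n(X^{V_n}_{s-})\big)^2 1_{\{|\tilde w_n(X^{V_n}_s)-\tilde w_n(X^{V_n}_{s-})|\le1\}}1_{\{X^{V_n}_{s-}\in E_n\}}+1 .
\]
Taking $E_\nu$ of the last term gives $\nu(E)<\infty$, which is exactly where the hypothesis $\nu(E)<\infty$ enters.

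Next I would apply the Lévy system $(N^n,H^n)$ of $X^{V_n}$ to the first sum. This rewrites its $P_\nu$-expectation as $E_\nu[B_\sigma]$, where $B$ is the PCAF of $X^{V_n}$ whose Revuz measure is the restriction to $E_n$ of
\[
\mu^n(dx)=2\int_{V_n}\big(\tilde w_n(x)-\tilde w_n(y)\big)^2 1_{\{|\tilde w_n(x)-\tilde w_n(y)|\le1\}}\,J^n(dy,dx),
\]
using $J^n(dy,dx)=\tfrac12 N^n(x,dy)\mu_{H^n}(dx)$. Since $E_n\subset V_n^4$ and $w_n=u_nf_n$, estimate (\ref{C}) shows $\mu^n(E_n)<\infty$; dropping the indicator only enlarges the integral. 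Thus $B$ has a finite Revuz measure supported in $E_n$, and the interior cemetery jumps do not enter the interior sum because any jump of $X^{V_n}$ to its cemetery occurs only at $\tau_{V_n}\ge\sigma$.

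Finally I would convert the expected value of the finite-mass PCAF $B$ into a potential-theoretic pairing. Using the Revuz--co-potential identity $E_\nu[\int_0^\infty e^{-s}\,dB_s]=\int\widetilde{\hat U_1\nu}\,d\mu^n|_{E_n}$, valid for $\nu\in S_0$, and the defining inequality $\hat U_1\nu\le c\,\hat G_1\phi=c\,\hat h$ for $\nu\in\hat{S}^{*}_{00}$, together with the boundedness of $\widetilde{\hat h}$ on $V_n$ (arranged above), I obtain
\[
E_\nu\Big[\int_0^\infty e^{-s}\,dB_s\Big]\le c\int_{E_n}\widetilde{\hat h}\,d\mu^n\le c\,\|\widetilde{\hat h}\|_{L^\infty(V_n)}\,\mu^n(E_n)<\infty .
\]
Since $B$ is increasing with $B_0=0$ and $\sigma\le t$, one has $B_\sigma\le e^{t}\int_0^\sigma e^{-s}\,dB_s\le e^t\int_0^\infty e^{-s}\,dB_s$, so $E_\nu[B_\sigma]<\infty$, and adding the boundary term $\nu(E)$ yields the integrability of $F^{[u],*}_{t\wedge\tau_{E_n}}$.

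I expect the main obstacle to be the book-keeping in the comparison step: one must argue carefully that, up to the single exit jump, the jumps of $\tilde u$ along $X$ on $[\![0,\tau_{E_n}]\!]$ coincide with the small jumps of $\tilde w_n$ along $X^{V_n}$, given that $\tilde u=\tilde w_n$ holds only on $E_n$ and that the left limit $X_{s-}$ may sit on the quasi-boundary of $E_n$ on a $P_\nu$-negligible set of times. Once this reduction is in place, the analytic input---the co-potential identity and the mass bound (\ref{C})---is routine.
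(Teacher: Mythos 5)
Your overall strategy is the paper's: localize via a bounded function agreeing with $u$ on $E_n$, bound the single exit jump by $1$ (contributing $\nu(E)$ in expectation, the only place where $\nu(E)<\infty$ enters), compensate the remaining jump sum through a L\'evy system, and close the estimate with the co-potential domination built into $\hat{S}^{*}_{00}$ together with the finiteness conditions fixed before the lemma. You do make two genuine streamlinings. First, the paper compares with $u_n$ rather than $u_nf_n$ and therefore has to run a chain of elementary inequalities (of the type $f_n^2(x)(u_n(y)-u_n(x))^2\le 2(f_n(x)-f_n(y))^2+2f_n^2(y)(u_n(y)-u_n(x))^2$, etc.) to reduce to (\ref{C}) and the analogous $f_n$-condition; your choice $w_n=u_nf_n$ makes (\ref{C}) apply verbatim. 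Second, where the paper simply cites \cite[Lemma A.9]{MMS} for $E_\nu(A_t)\le C_\nu(1+t)\int_E\tilde{\hat{h}}\,d\mu_A$, you re-derive it via the $e^{-s}$ trick and the duality $E_\nu[\int_0^\infty e^{-s}dB_s]=\int\widetilde{\hat{U}_1\nu}\,d\mu_B$; this is fine, but note that your $B$ is a PCAF of $X^{V_n}$ while $\hat{U}_1\nu$ and $\hat{S}^{*}_{00}$ are global objects, so you are implicitly invoking the compatibility of the L\'evy systems and Revuz correspondences of $X$ and $X^{V_n}$ on $[\![0,\tau_{V_n}[\![$ --- the same fact the paper uses when it interchanges $J,K$ with $J^n,K^n$ on $E_n$. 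Your worry about left limits on the fine boundary of $E_n$ dissolves by a L\'evy system computation: the expected number of jump times $s\le\tau_{E_n}$ with $X_{s-}\notin E_n$ equals $E_\nu[\int_0^{\tau_{E_n}}1_{E_n^c}(X_s)N(X_s,E_\Delta\setminus\{X_s\})\,dH_s]=0$, since $X_s\in E_n$ for $s<\tau_{E_n}$ and $H$ is continuous.

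The one place your write-up is wrong as stated is the treatment of killing. Your interior sum runs over $0<s\le\sigma$ and can contain a cemetery jump of $X^{V_n}$: if $X$ jumps from a point of $E_n$ directly out of $V_n$ (or to $\Delta$), then $\sigma=\tau_{E_n}=\tau_{V_n}$ and the term at $s=\sigma$ equals $\tilde{w}_n(X^{V_n}_{\sigma-})^2 1_{\{|\tilde{w}_n(X^{V_n}_{\sigma-})|\le 1\}}$ with $X^{V_n}_{\sigma-}$ possibly in $E_n$; so your claim that cemetery jumps ``occur only at $\tau_{V_n}\ge\sigma$'' does not exclude them from the sum, and the compensation identity with your $\mu^n$ (which integrates $J^n$ over $y\in V_n$ only) is then false --- the compensator must also carry $\tilde{w}_n^2 1_{\{|\tilde{w}_n|\le1\}}K^n$ on $E_n$. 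The repair is one line either way: insert the indicator $1_{\{X^{V_n}_s\in V_n\}}$ into the interior sum (the discarded $s=\sigma$ term of $F^{[u],*}$ is already absorbed in your $+1$), or keep the sum as is and add the killing contribution, which is finite since $K^n(V^4_n)<\infty$ and $w_n$ is bounded --- this is exactly the term $\|u_n|_{E_n}\|^2_{\infty}K^n(E_n)$ that appears in the paper's estimate. With that patch your argument is correct and marginally shorter than the paper's.
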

\begin{proof}
Let $\nu\in \hat{S}^{*}_{00}$ with $\nu(E)<\infty$. By \cite[Lemma
A.9]{MMS}, there exists a constant $C_{\nu}>0$ such that for any
PCAF $A$ with Revuz measure $\mu_A$, we have
$$
E_{\nu}(A_t)\le C_{\nu}(1 + t)\int_E\tilde{\hat h}d\mu_A,\ \ t>0.
$$
Therefore,
\begin{eqnarray*}
& &E_{\nu}[F^{[u],*}_{t\wedge \tau_{E_n}}]\\
&\leq&E_{\nu}\left[\sum_{0<s\leq
t\wedge\tau_{E_n}}(u_n(X_s)-u_n(X_{s-}))^21_{\{|u_n(X_s)-u_n(X_{s-})|\leq1\}}\right]+\nu(E)\\
&=&E_{\nu}\left[\int^{t\wedge\tau_{E_n}}_0\int_{E_{\Delta}}[u_n(y)-u_n(X_{s})]^21_{\{|u_n(y)-u_n(X_s)|\leq1\}}
N(X_s,dy)dH_s\right]+\nu(E)\\
&\leq&C_{\nu}(1+t)\int_{E_n}\tilde{\hat{h}}(x)\int_{E_{\Delta}}(u_n(y)-u_n(x))^21_{\{|u_n(y)-u_n(x)|
\leq1\}}N(x,dy)\mu_H(dx)+\nu(E)\\
&=&C_{\nu}(1+t)\left\{2\int_{E_n}\tilde{\hat{h}}(x)\int_{E}(u_n(y)-u_n(x))^21_{\{|u_n(y)-u_n(x)|
\leq1\}}J(dy,dx)\right.\\
& &+\left.\int_{E_n}\tilde{\hat{h}}(x)u_n^2(x)1_{\{|u_n(x)|
\leq1\}}K(dx)\right\}+\nu(E)\\
&=&C_{\nu}(1+t)\left\{2\int_{E_n}\tilde{\hat{h}}(x)\int_{V_n}(u_n(y)-u_n(x))^21_{\{|u_n(y)-u_n(x)|
\leq1\}}J^n(dy,dx)\right.\\
& &+\left.\int_{E_n}\tilde{\hat{h}}(x)u_n^2(x)1_{\{|u_n(x)|
\leq1\}}K^n(dx)\right\}+\nu(E)\\
&\le&C_{\nu}(1+t)\|\tilde{\hat{h}}|_{E_n}\|_{\infty}\left\{2\int_{E_n}f^2_n(x)\int_{V_n}(u_n(y)-u_n(x))^21_{\{|u_n(y)-u_n(x)|
\leq1\}}J^n(dy,dx)\right.\\
& &+\left.\|u_n|_{E_n}\|^2_{\infty}K^n(E_n)\right\}+\nu(E)\\
&\le&C_{\nu}(1+t)\|\tilde{\hat{h}}|_{E_n}\|_{\infty}\left\{4\int_{E_n}\int_{V_n}(f_n(x)-f_n(y))^2J^n(dy,dx)\right.\\
& &+\left.4\int_{E_n}\int_{V_n}f^2_n(y)(u_n(y)-u_n(x))^2J^n(dy,dx)+\|u_n|_{E_n}\|^2_{\infty}K^n(E_n)\right\}+\nu(E)\\
&\le&C_{\nu}(1+t)\|\tilde{\hat{h}}|_{E_n}\|_{\infty}\left\{4\int_{E_n}\int_{V_n}(f_n(x)-f_n(y))^2J^n(dy,dx)\right.\\
& &+\left.8\int_{E_n}\int_{V_n}((u_nf_n)(x)-(u_nf_n)(y))^2J^n(dy,dx)\right.\\
& &+\left.8\int_{E_n}u^2_n(x)\int_{V_n}(f_n(x)-f_n(y))^2J^n(dy,dx)\right.\\
& &+\left.\|u_n|_{E_n}\|^2_{\infty}K^n(E_n)\right\}+\nu(E)\\
&<&\infty.
\end{eqnarray*}
\end{proof}

\noindent \textbf{Proof of Theorem \ref{thm3.2} Assertion (i).}
Let $\{V_n\}$, $\{E_n\}$ and $\{u_nf_n\}$ be given as before. By
Lemma \ref{thm3.6}, for $n\in \mathbb{N}$, there exist unique
$M^{n,[u_nf_n]}\in\dot{\mathcal{M}}^{V_n}$ and $N^{n,[u_nf_n]}\in
\mathcal{N}^{V_n}_{c}$ such that ${\rm for}\ {\cal
E}{\textrm{-}{\rm q.e.}}\ x\in V_n$,
$$
u_nf_n(X^{{V_n}}_{t})-u_nf_n(X^{{V_n}}_{0})=M^{n,[u_nf_n]}_{t}+N^{n,[u_nf_n]}_{t},\
\ t\ge 0,\ \ P_{x}{\textrm{-}{\rm a.s.}}
$$
Hereafter, for a martingale $M$, we denote by $M^c$ and $M^d$ its
continuous part and purely discontinuous part, respectively. By
\cite[Lemma 1.14]{MSW14}, for $n<l$, we have
 $M^{n,[u_nf_n],c}_{t\wedge\tau_{E_n}}=M^{l,[u_lf_l],c}_{t\wedge\tau_{E_n}}$
, $t\ge0$, $P_{x}{\textrm{-}{\rm a.s.}}\ \ {\rm for}\ {\cal
E}{\textrm{-}{\rm q.e.}}\ x\in  V_n$. Therefore, we can define
$M^{[u],c}_{t\wedge\tau_{E_n}}:=\lim_{l\rightarrow\infty}M^{l,[u_lf_l],c}_{t\wedge\tau_{E_n}}$
and $M^{[u],c}_t:=0$ for $t>\zeta$  if there exists some $n$ such
that $\tau_{E_n}=\zeta$ and $\zeta<\infty$; or $M^{[u],c}_t:=0$
for $t\ge\zeta$, otherwise. Following the argument of the proof of
\cite [Theorem 1.4]{MSW14}, we can show that $M^{[u],c}$ is well
defined, $M^{[u],c}\in \dot{\mathcal{M}}_{loc}$ and
$M^{[u],c}\in{\mathcal{M}}^{I(\zeta)}_{loc}$.

Denote $\Delta u(X_s):=\tilde u(X_s)-\tilde u(X_{s-})$. By Lemma
\ref{lem2.3},
\begin{eqnarray*}
Y^l_t:&=&\sum_{0<s\leq t}\Delta u(X_s)I_{\{\frac{1}{l}\leq|\Delta
u(X_s)|\leq1\}}-
\left(\sum_{0<s\leq t}\Delta u(X_s)I_{\{\frac{1}{l}\leq|\Delta u(X_s)|\leq1\}}\right)^p\\
&=&\sum_{0<s\leq t}\Delta u(X_s)I_{\{\frac{1}{l}\leq|\Delta u(X_s)|\leq1\}}\\
&&-\int^{t}_0\int_{\{\frac{1}{l}\leq|\tilde u(y)-\tilde
u(X_s)|\leq1\}}(\tilde u(y)-\tilde u(X_{s}))N(X_s,dy)dH_s
\end{eqnarray*}
is well-defined. Hereafter $^p$ denotes the dual predictable
projection. Further, by Lemma \ref{lem2.3} and following the
argument of the proof of \cite [Theorem 1.4]{MSW14} (with $M^l$
therein replaced with $Y^l$ of this paper), we can show that for
${\cal E}\textrm{-}{\rm q.e.}\ x\in E$,
$Y^{l_k}_{t\wedge\tau_{E_n}}$ converges uniformly in $t$ on each
finite interval for a subsequence $\{l_k\rightarrow\infty\}$ (and
hence for the whole sequence $\{k\}$) and for each $k$,
\begin{eqnarray*}
Y^{l_k}_{(t+s)\wedge\tau_{E_n}}=Y^{l_k}_{t\wedge\tau_{E_n}}+Y^{l_k}_{s\wedge\tau_{E_n}}\circ\theta_{t\wedge\tau_{E_n}},\
\mbox{if} \ 0\leq t,s<\infty.
\end{eqnarray*}
Thus,  $L^{n}$, the limit of
$\{Y^{l_{k}}_{s\wedge\tau_{E_n}}\}_{k=1}^{\infty}$, is a
$P_x$-square integrable purely  discontinuous martingale for
${\cal E}\textrm{-}{\rm q.e.}\ x\in E$ and satisfies:
\begin{eqnarray*}
L^{n}_{(t+s)\wedge\tau_{E_n}}=L^{n}_{t\wedge\tau_{E_n}}+L^{n}_{s\wedge\tau_{E_n}}\circ\theta_{t\wedge\tau_{E_n}},\
\mbox{if} \ 0\leq t,s<\infty.
\end{eqnarray*}
 By the above construction, we
find that
$L^{n_{1}}_{t\wedge\tau_{E_{n_{1}}}}=L^{n_{2}}_{t\wedge\tau_{E_{n_{1}}}}$
for $n_{1}\leq n_{2}$. We define
$Y^{[u],d}_{t}=L^{n}_{t},t\leq\tau_{E_{n}}$, and
$Y^{[u],d}_{t}=L^{n}_{t},t\geq\zeta$, if for some $n$,
$\tau_{E_n}=\zeta<\infty$; $Y^{[u],d}_{t}=0,t\geq\zeta$,
otherwise. Then $Y^{[u],d}\in{\mathcal{M}}^{I(\zeta)}_{loc}$,
which gives all the jumps of $\tilde{u}(X_t)-\tilde{u}(X_0)$ on
$I(\zeta)$ with jump size less than or equal to 1. Since
$\{Y^{l}_{t}\}$ is an MAF for each $l$, we find that
$\{Y^{[u],d}_{t}\}$ is a local MAF by the uniform convergence on
$I(\zeta)$.

We define $Y^{[u]}:=M^{[u],c}+Y^{[u],d}$ and
$Z^{[u]}_{t\wedge\tau_{E_n}}:=\tilde{u}(X_{t\wedge\tau_{E_n}})-\tilde{u}(X_0)-Y^{[u]}_{t\wedge\tau_{E_n}}-F^{[u]}_{t\wedge\tau_{E_n}}$
for each $n\in\mathbb{N}$. Then $Z^{[u]}$ is a local AF of ${\bf
M}$ and $t\mapsto Z^{[u]}_{t\wedge\tau_{E_n}}$ is continuous. Now
we show that $\{Z^{[u]}_{t\wedge\tau_{E_n}}\}$ has zero quadratic
variation  and hence $Z^{[u]}\in {\mathcal{L}}_{c}$. By
Fukushima's decomposition for part processes, we have that
\begin{eqnarray}\label{Vs}
& &{u_nf_{n}}(X_{t\wedge\tau_{E_n}})-{u_nf_{n}}(X_{0})\nonumber\\
&=&{u_nf_{n}}(X^{V_{n}}_{t\wedge\tau_{E_n}})-{u_nf_{n}}(X^{V_{n}}_{0})\nonumber\\
&=&M^{n,[u_nf_{n}]}_{t\wedge\tau_{E_n}}+N^{n,[u_nf_{n}]}_{t\wedge\tau_{E_n}}\nonumber\\
&=&M^{n,[u_nf_{n}],c}_{t\wedge\tau_{E_n}}+M^{n,[u_nf_{n}],d}_{t\wedge\tau_{E_n}}+N^{n,[u_nf_{n}]}_{t\wedge\tau_{E_n}}\nonumber\\
&=&M^{n,[u_nf_{n}],c}_{t\wedge\tau_{E_n}}+M^{n,[u_nf_{n}],sd}_{t\wedge\tau_{E_n}}+M^{n,[u_nf_{n}],bd}_{t\wedge\tau_{E_n}}
+N^{n,[u_nf_{n}]}_{t\wedge\tau_{E_n}},
\end{eqnarray}
where
\begin{eqnarray*} M^{n,[u_nf_n],sd}_t
&=&\lim_{l\rightarrow\infty}\left\{
\sum_{0<s\leq t}(u_nf_n(X^{V_n}_s)-u_nf_n(X^{V_n}_{s-}))1_{\{\frac{1}{l}\le|u_nf_n(X^{V_n}_s)-u_nf_n(X^{V_n}_{s-})|\leq1\}}\right.\\
&&\left.-\int^{t}_0\int_{\{\frac{1}{l}\le|u_nf_n(y)-u_nf_n(X^{V_n}_{s})|\leq1\}}(u_nf_n(y)-u_nf_n(X^{V_n}_{s}))N^n(X^{V_n}_s,dy)dH^n_s\right\},
\end{eqnarray*}
and
\begin{eqnarray*} M^{n,[u_nf_n],bd}_t
&=&\sum_{0<s\leq t}(u_nf_n(X^{V_n}_s)-u_nf_n(X^{V_n}_{s-}))1_{\{|u_nf_n(X^{V_n}_s)-u_nf_n(X^{V_n}_{s-})|>1\}}\\
&&-\int^{t}_0\int_{\{|u_nf_n(y)-u_nf_n(X^{V_n}_{s})|>1\}}(u_nf_n(y)-u_nf_n(X^{V_n}_{s}))N^n(X^{V_n}_s,dy)dH^n_s.
\end{eqnarray*}

We define
\begin{eqnarray*}
B_t&:=&\left\{(\tilde u(X_{\tau_{E_n}})-\tilde u(X_{\tau_{E_n}-}))1_{\{|\tilde u(X_{\tau_{E_n}})-\tilde u(X_{\tau_{E_n}-})|\leq1\}}\right.\\
&&\left.-(u_nf_n(X_{\tau_{E_n}})-u_nf_n(X_{\tau_{E_n}-}))1_{\{|u_nf_n(X_{\tau_{E_n}})-u_nf_n(X_{\tau_{E_n}-})|\leq1\}}\right\}
1_{\{\tau_{E_n\leq t}\}}.
\end{eqnarray*}
$\{B_{t}\}$ is an adapted quasi-left continuous bounded variation
processes and hence its dual predictable projection
$\{B^{p}_{t}\}$
 is an adapted continuous bounded variation
processes (cf. \cite[Theorem A.3.5]{Fu94}). By comparing
(\ref{Vs}) to
$$\tilde{u}(X_{t\wedge\tau_{E_n}})-\tilde{u}(X_0)=M^{[u],c}_{t\wedge\tau_{E_n}}+Y^{[u],d}_{t\wedge\tau_{E_n}}
+Z^{[u]}_{t\wedge\tau_{E_n}}+F^{[u]}_{t\wedge\tau_{E_n}},
$$
 we get
\begin{eqnarray}\label{newasd}
Z^{[u]}_{t\wedge\tau_{E_n}}
&=&N^{n,[u_nf_{n}]}_{t\wedge\tau_{E_n}}+M^{n,[u_nf_{n}],sd}_{t\wedge\tau_{E_n}}-Y^{[u],d}_{t\wedge\tau_{E_n}}+M^{n,[u_nf_{n}],bd}_{t\wedge\tau_{E_n}}
-F^{[u]}_{t\wedge\tau_{E_n}}\nonumber\\
& &+\tilde{u}(X_{t\wedge\tau_{E_n}})
-{u_nf_{n}}(X_{t\wedge\tau_{E_n}})\nonumber\\
&=&N^{n,[u_nf_{n}]}_{t\wedge\tau_{E_n}}+(M^{n,[u_nf_{n}],sd}_{t\wedge\tau_{E_n}}-Y^{[u],d}_{t\wedge\tau_{E_n}}+B_t-B^p_t)+B^p_t\nonumber\\
&
&-\int^{t\wedge\tau_{E_n}}_0\int_{\{|u_nf_n(y)-u_nf_n(X^{V_n}_{s})|>1\}}(u_nf_n(y)-u_nf_n(X^{V_n}_{s}))\nonumber\\
& &\ \ \ \ \ \ \ \ \ \ \ \ \ \ \ \ \ \cdot
N^n(X^{V_n}_s,dy)dH^n_s.
\end{eqnarray}
Hence
$\{M^{n,[u_nf_{n}],sd}_{t\wedge\tau_{E_n}}-Y^{[u],d}_{t\wedge\tau_{E_n}}+B_t-B^p_t\}$
is a purely discontinuous martingale with zero jump, which must be
equal to zero. The quadratic variations of
$N^{n,[u_nf_{n}]}_{t\wedge\tau_{E_n}}$ and $B^p_t$ vanish in
$P_{\bar{h}_{n}\cdot m}{\textrm{-}}{\rm measure}$ and
$P_{\phi\cdot m}{\textrm{-}}{\rm measure}$, respectively. Denote
by $C^n_t$ the last term of (\ref{newasd}). By (\ref{C}),
 one finds that $\{C^n_t\}$ is a $P_{\nu}$-square-integrable continuous bounded variation process for any $\nu\in \hat{S}^{*}_{00}$
satisfying $\nu(E)<\infty$. Hence its quadratic variation vanishes
in $P_{\phi\cdot m}{\textrm{-}}{\rm measure}$. Therefore, the
quadratic variation of $\{Y^{[u]}_{t\wedge\tau_{E_n}}\}$ vanishes
in $P_m{\textrm{-}}{\rm measure}$ since $m(E_n)<\infty$, i.e.,
$\{Y^{[u]}_{t\wedge\tau_{E_n}}\}$ has zero quadratic variation.

Finally, we prove the uniqueness of decomposition (\ref{new3}).
Suppose that $Y'\in \mathcal{M}^{I(\zeta)}_{loc}$ and
$Z'\in\mathcal{L}_c$ such that
 \begin{eqnarray*}
\tilde u(X_t)-\tilde u(X_0)=Y'_t+Z'_t+F^{[u]}_t,\ \ t\ge0,\ \
P_{x}{\textrm{-}{\rm a.s.}}\ \ {\rm for}\ {\cal E}{\textrm{-}{\rm
q.e.}}\ x\in E.
\end{eqnarray*}
By \cite[Proposition 2.4]{MSW14}, we can choose an
$\{E_n\}\in\Theta$ such that $I(\zeta)=\cup_n
[\![0,\tau_{E_n}]\!]$ $ P_{x}{\textrm{-}{\rm a.s.}}\ \ {\rm for}\
{\cal E}{\textrm{-}{\rm q.e.}}\ x\in E$. Then, for each
$n\in\mathbb{N}$, $\{(Y^{[u]}-Y')^{\tau_{E_n}}\}$ is a locally
square integrable martingale and a zero quadratic variation
process. This implies that
$P_m(\langle(Y^{[u]}-Y')^{\tau_{E_n}}\rangle_t=0, \forall
t\in[0,\infty))=0$. Consequently by the analog of \cite[Lemma
5.1.10]{Fu94} in the semi-Dirichlet forms setting,
$P_x(\langle(Y^{[u]}-Y')^{\tau_{E_n}}\rangle_t=0, \forall
t\in[0,\infty))=0$ ${\rm for}\ {\cal E}{\textrm{-}{\rm q.e.}}\
x\in E$. Therefore $Y_t^{[u]}=Y'_t$, $0\le t\le\tau_{E_n}$,
$P_{x}{\textrm{-}{\rm a.s.}}\ {\rm for}\ {\cal E}{\textrm{-}{\rm
q.e.}}\ x\in E$. Since $n$ is arbitrary, we obtain the uniqueness
of decomposition (\ref{new3}) up to the equivalence of local AFs.

\noindent \textbf{Proof of Theorem \ref{thm3.2} Assertion (ii).}
By (i), $Y^{[u]}\in{\mathcal{M}}^{I(\zeta)}_{loc}$. Hence $\langle
Y^{[u],d}\rangle_t=(\int_0^{t}\int_{E_{\Delta}}(\tilde
u(X_s)-\tilde u(y))^21_{\{|\tilde u(X_s)-\tilde
u(y)|\leq1\}}N(X_s,dy)dH_s)1_{I(\zeta)}$ is a PCAF on $I(\zeta)$
and can be extended to a PCAF by \cite[Remark 2.2]{CFKZ}.  The
Revuz measure of $\langle Y^{[u],d}\rangle$ is given by
\begin{eqnarray*} \mu_{\langle
u\rangle}^{d}(dx)&=&2\int_{E}(\tilde u(x)-\tilde
u(y))^21_{\{|\tilde
u(x)-\tilde u(y)|\leq1\}}J(dy,dx)\\
& &+\tilde {u}^2(x)1_{\{|\tilde u(x)|\leq1\}}K(dx).
\end{eqnarray*}
By \cite[Lemma 1.1]{MSW14}, $\mu_{\langle u\rangle}^{d}$ is a
smooth measure. Therefore, there exists an $\{E'_n\}\in \Theta$
such that $ \mu_{\langle u\rangle}^{d}(E'_n)<\infty$ for each
$n\in \mathbb{N}$. To simplify notations, we use still $E_n$ to
denote $E_n\cap E'_n$. The remaining part of the proof is similar
to that of \cite[Theorem 1.15]{MSW14}. We omit the details here.
\hfill\fbox

 \begin{rem}
(i) As in \cite[Theorem 1.4]{MSW14}, if we use
${\mathcal{M}}^{[\![0,\zeta[\![}_{loc}$ instead of
${\mathcal{M}}^{I(\zeta)}_{loc}$ , then the uniqueness of the
decomposition (\ref{new3}) may fail to be true.

(ii) For $u\in D(\mathcal{E})_{loc}$, if Condition (S) holds,
i.e., $\mu_{u}\in S$, then by \cite[Theorem 1.4]{MSW14}, there
exist unique $M^{[u]}\in{\mathcal{M}}^{I(\zeta)}_{loc}$ and
$N^{[u]}\in {\mathcal{L}}_{c}$ such that
\begin{equation}\label{Vs2}
\tilde{u}(X_{t})-\tilde{u}(X_{0})=M^{[u]}_{t}+N^{[u]}_{t},\ \
t\ge0,\ \ P_{x}{\textrm{-}a.s.}\ \ {for}\ {\cal
E}{\textrm{-}q.e.}\ x\in E.
\end{equation}
with\begin{equation}\label{Vs3}
M^{[u]}_{t}=M^{[u],c}_{t}+M^{[u],d}_{t},
\end{equation}
and
\begin{eqnarray}\label{Vs4} M^{[u],d}_{t}
&=&\lim_{l\rightarrow\infty}\left\{
\sum_{0<s\leq t}(\tilde u(X_s)-\tilde u(X_{s-}))1_{\{\frac{1}{l}\le|\tilde u(X_s)-\tilde u(X_{s-})|\}}\right.\nonumber\\
&&\left.-\int^{t}_0\int_{\{\frac{1}{l}\le|\tilde u(y)-\tilde
u(X_{s})|\}}(\tilde u(y)-\tilde u(X_{s}))N(X_s,dy)dH_s\right\}.
\end{eqnarray}
 By comparing
(\ref{Vs2})-(\ref{Vs4}) with
\begin{eqnarray*}\tilde{u}(X_t)-\tilde{u}(X_0)&=&Y^{[u]}_{t}
+Z^{[u]}_{t}+F^{[u]}_{t}\\
&=&M^{[u],c}_{t}+Y^{[u],d}_{t} +Z^{[u]}_{t}+F^{[u]}_{t},
\end{eqnarray*}
\begin{eqnarray*} Y^{[u],d}_{t}
&=&\lim_{l\rightarrow\infty}\left\{
\sum_{0<s\leq t}(\tilde u(X_s)-\tilde u(X_{s-}))1_{\{\frac{1}{l}\le|\tilde u(X_s)-\tilde u(X_{s-})|\le 1\}}\right.\nonumber\\
&&\left.-\int^{t}_0\int_{\{\frac{1}{l}\le|\tilde u(y)-\tilde
u(X_{s})|\le 1\}}(\tilde u(y)-\tilde
u(X_{s}))N(X_s,dy)dH_s\right\},
\end{eqnarray*}
we get
\begin{eqnarray*}
M^{[u]}_t&=&Y^{[u]}+\sum_{0<s\leq t}(\tilde u(X_s)-\tilde
u(X_{s-}))1_{\{|\tilde u(X_s)-\tilde u(X_{s-})|>1\}}\\
&&-\int^{t}_0\int_{\{|\tilde u(y)-\tilde u(X_{s})|>1\}}(\tilde
u(y)-\tilde u(X_{s}))N(X_s,dy)dH_s,
\end{eqnarray*}
and
$$
N^{[u]}_t=Z^{[u]}+\int^{t}_0\int_{\{|\tilde u(y)-\tilde
u(X_{s})|>1\}}(\tilde u(y)-\tilde u(X_{s}))N(X_s,dy)dH_s.
$$

\end{rem}

\section [short title]{Stochastic Integral and It\^o's formula}\label{Sec:Fukushima}
\setcounter{equation}{0}

Let $({\cal E},D({\cal E}))$ be a quasi-regular semi-Dirichlet
form on $L^2(E;m)$ with associated Markov process $((X_t)_{t\ge
0}, (P_x)_{x\in E_{\Delta}})$. Throughout this section, we put the
following assumption.
 \begin{assum}\label{assum1q}
There exist $\{{V_n}\}\in \Theta$,  Dirichlet forms
$(\eta^{(n)},D(\eta^{(n)}))$ on $L^2(V_n;m)$, and constants
$\{C_n>1\}$ such that  for each $n\in\mathbb{N}$, $D(\eta^{(n)})=
D(\mathcal{E})_{{V_n}}$ and
\begin{eqnarray*}
\frac{1}{C_n}\eta^{(n)}_1(u,u)\leq\mathcal{E}_1(u,u)\leq
C_n\eta^{(n)}_1(u,u), \ \ \forall u\in D(\mathcal{E})_{{V_n}}.
\end{eqnarray*}
\end{assum}

Obviously, Assumption \ref{assum1q} implies Assumption
\ref{assum100}. In this section, we will first define stochastic
integrals for part forms $(\mathcal{E}^{V_n},D({\cal{E}})_{V_n})$
and then extend them to $({\cal E},D({\cal E}))$.

\subsection {Stochastic Integral for Part Process}

We fix a $\{{V_n}\}\in \Theta$ satisfying Assumption
\ref{assum1q}. Without loss of generality, we assume that
$\widetilde{\hat{h}}$ is bounded on each ${V_n}$, otherwise we may
replace ${V_n}$ by ${V_n}\cap\{\widetilde{\hat{h}}< n \}$. For
$n\in\mathbb{N}$, let $(\mathcal{E}^{V_n},D({\cal{E}})_{V_n})$ be
the part form of $(\mathcal{E},D({\cal{E}}))$ on $L^2(V_n;m)$.
Then $(\mathcal{E}^{V_n},D({\cal{E}})_{V_n})$ is a quasi regular
semi-Dirichlet form with associated Markov process
$((X^{V_n}_t)_{t\ge 0},(P^{V_n}_x)_{x\in (V_n)_{\Delta}})$.

Let $u\in D(\mathcal{E})_{V_n}$ and denote $A_t^{n,[u]}=
\tilde{u}(X^{V_n}_{t})-\tilde{u}(X^{V_n}_{0})$. By Lemma
\ref{thm3.6}, we have the decomposition (\ref{sdf}). For $v\in
D(\mathcal{E})_{V_n,b}$, we will follow \cite{CMS12} to define the
stochastic integral $\int_0^t\tilde v(X^{V_n}_{s-})dA_s^{n,[u]}$
and derive the related It\^o's formula. Note that since
$(\mathcal{E}^{V_n},D({\cal{E}})_{V_n})$ is only a semi-Dirichlet
form, its symmetric part $(\tilde{\cal
E}^{V_n},D({\cal{E}})_{V_n})$ might not be a Dirichlet form.
However, we can use $(\tilde{\cal \eta}^{(n)},D({\cal
\eta}^{(n)}))$, the symmetric part of $({\cal \eta}^{(n)},D({\cal
\eta}^{(n)}))$, to substitute $(\tilde{\cal
E}^{V_n},D({\cal{E}})_{V_n})$ and then follow the argument of
\cite{CMS12} to define Nakao's integral $\int_0^t\tilde
v(X^{V_n}_{s-})dN_s^{n,[u]}$ and prove its related properties.
Below we will mainly state the results and point out only the
necessary modifications in proofs. For more details we refer the
reader to \cite{CMS12}.

Similar to \cite[Lemma 2.1]{CMS12}, we can prove the following
lemma.
\begin{lem}\label{lemman1}
Let $f\in D({\cal E})_{V_n}$. Then there exist unique $f^*\in D({\cal E})_{V_n}$ and $f^{\triangle}\in D({\cal E})_{V_n}$ such that for any $g\in D({\cal E})_{V_n}$,
\begin{equation}\label{1}
{\cal E}^{V_n}_1(f,g)=\tilde{\cal \eta}^{(n)}_1(f^*,g)
\end{equation}
and
\begin{equation}\label{2}
\tilde{\cal \eta}^{(n)}_1(f,g)={\cal E}^{V_n}_1(f^{\triangle},g).
\end{equation}
\end{lem}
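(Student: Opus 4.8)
The plan is to realize both identities as representation statements on the Hilbert space $(D(\mathcal{E})_{V_n},\tilde{\eta}^{(n)}_1)$: equation (\ref{1}) by the Riesz representation theorem and equation (\ref{2}) by the Lax--Milgram theorem. The key preliminary observation is that, since $\eta^{(n)}$ is a Dirichlet form, its antisymmetric part vanishes on the diagonal, so $\tilde{\eta}^{(n)}_1(u,u)=\eta^{(n)}_1(u,u)$ for every $u$. Hence Assumption \ref{assum1q} upgrades to the two-sided bound
\[
\frac{1}{C_n}\tilde{\eta}^{(n)}_1(u,u)\le \mathcal{E}_1(u,u)\le C_n\,\tilde{\eta}^{(n)}_1(u,u),\qquad u\in D(\mathcal{E})_{V_n},
\]
which shows that $\tilde{\eta}^{(n)}_1$ is a genuine inner product on $D(\mathcal{E})_{V_n}$ whose norm is equivalent to the symmetrized $\mathcal{E}_1$-norm; in particular $(D(\mathcal{E})_{V_n},\tilde{\eta}^{(n)}_1)$ is a Hilbert space, and $\mathcal{E}^{V_n}_1(u,u)=\mathcal{E}_1(u,u)$ for $u\in D(\mathcal{E})_{V_n}$ since the part form coincides with $\mathcal{E}$ on its domain.

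For (\ref{1}), first I would consider the linear functional $g\mapsto\mathcal{E}^{V_n}_1(f,g)$ on $D(\mathcal{E})_{V_n}$. The part form $(\mathcal{E}^{V_n},D(\mathcal{E})_{V_n})$ is again a semi-Dirichlet form, hence a coercive closed form satisfying the weak sector condition with some constant $K\ge 1$; combining this with the upper bound above gives
\[
|\mathcal{E}^{V_n}_1(f,g)|\le K\,\mathcal{E}^{V_n}_1(f,f)^{1/2}\mathcal{E}^{V_n}_1(g,g)^{1/2}\le K\sqrt{C_n}\,\mathcal{E}^{V_n}_1(f,f)^{1/2}\,\tilde{\eta}^{(n)}_1(g,g)^{1/2}.
\]
Thus $g\mapsto\mathcal{E}^{V_n}_1(f,g)$ is bounded in the $\tilde{\eta}^{(n)}_1$-norm, and the Riesz representation theorem on $(D(\mathcal{E})_{V_n},\tilde{\eta}^{(n)}_1)$ yields a unique $f^*\in D(\mathcal{E})_{V_n}$ with $\mathcal{E}^{V_n}_1(f,g)=\tilde{\eta}^{(n)}_1(f^*,g)$ for all $g$.

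For (\ref{2}), I would instead invoke Lax--Milgram, since $\mathcal{E}^{V_n}_1$ is not symmetric and cannot serve directly as a Riesz inner product. On the Hilbert space $(D(\mathcal{E})_{V_n},\tilde{\eta}^{(n)}_1)$ the bilinear form $\mathcal{E}^{V_n}_1$ is bounded (by the same sector-condition estimate together with the equivalence, $|\mathcal{E}^{V_n}_1(u,v)|\le KC_n\,\tilde{\eta}^{(n)}_1(u,u)^{1/2}\tilde{\eta}^{(n)}_1(v,v)^{1/2}$) and coercive, since $\mathcal{E}^{V_n}_1(u,u)=\mathcal{E}_1(u,u)\ge C_n^{-1}\tilde{\eta}^{(n)}_1(u,u)$. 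The target functional $g\mapsto\tilde{\eta}^{(n)}_1(f,g)$ is bounded by Cauchy--Schwarz, so Lax--Milgram produces a unique $f^{\triangle}\in D(\mathcal{E})_{V_n}$ with $\mathcal{E}^{V_n}_1(f^{\triangle},g)=\tilde{\eta}^{(n)}_1(f,g)$ for all $g$. In both cases uniqueness is immediate from the non-degeneracy of $\tilde{\eta}^{(n)}_1$ (equivalently, from the injectivity of the Lax--Milgram solution operator).

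The step I expect to be the main point to get right is the transfer of the boundedness and coercivity constants through the norm equivalence of Assumption \ref{assum1q}: the lower bound $\mathcal{E}_1(u,u)\ge C_n^{-1}\tilde{\eta}^{(n)}_1(u,u)$ is exactly the coercivity needed for Lax--Milgram in (\ref{2}), while the weak sector condition of the semi-Dirichlet form is what allows the non-symmetric form $\mathcal{E}^{V_n}_1$ to represent the functional there; without either ingredient one could only handle the symmetric identity (\ref{1}). This mirrors the argument of \cite[Lemma 2.1]{CMS12}, with $\tilde{\eta}^{(n)}$ playing the role of the reference symmetric Dirichlet form, so only the routine verification of the estimates above needs to be carried out in detail.
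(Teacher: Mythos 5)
Your proposal is correct and is essentially the paper's own argument: the paper proves this lemma by adapting \cite[Lemma 2.1]{CMS12}, which is exactly your Riesz-representation/Lax--Milgram scheme on the Hilbert space $(D(\mathcal{E})_{V_n},\tilde{\eta}^{(n)}_1)$, with the reference Dirichlet form $\eta^{(n)}$ of Assumption \ref{assum1q} substituting for the (possibly non-Dirichlet) symmetric part of $\mathcal{E}^{V_n}$, and with the weak sector condition and the two-sided norm equivalence supplying boundedness and coercivity just as you describe. One cosmetic quibble: the identity $\tilde{\eta}^{(n)}_1(u,u)=\eta^{(n)}_1(u,u)$ holds for any bilinear form simply because the antisymmetric part vanishes on the diagonal, not because $\eta^{(n)}$ is a Dirichlet form, but this does not affect the argument.
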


Let $f,g\in D({\cal E})_{V_n}$. We use $\tilde{\mu}^{(n)}_{\langle
f,g\rangle}$ to denote the mutual energy measure of $f$ and $g$
w.r.t. the symmetric Dirichlet form $(\tilde{\cal
\eta}^{(n)},D({\cal E})_{V_n})$. Suppose that $u\in D({\cal
E})_{V_n}$ and $v\in D({\cal E})_{V_n,b}$. It is easy to see that
there exists a unique element in $D({\cal E})_{V_n}$, which is
denoted by $\lambda(u,v)$, such that
$$
\frac{1}{2}\int_{V_n} \tilde vd\tilde{\mu}^{(n)}_{\langle
h,u^*\rangle}=\tilde{\cal \eta}^{(n)}_1(\lambda(u,v),h),\ \
\forall h\in D({\cal E})_{V_n}.
$$
Let  $u^*$ and $\lambda(u,v)^{\triangle}$ be the unique elements
in $D({\cal E})_{V_n}$ as defined by (\ref{1}) and (\ref{2})
relative to $u$ and $\lambda(u,v)$, respectively. Similar to
\cite[Theorem 2.2]{CMS12}, we can prove the following result.
\begin{thm}
Let $u\in D({\cal E})_{V_n}$ and $v\in D({\cal E})_{V_n,b}$. Then, for any $h\in D({\cal E})_{V_n,b}$,
\begin{equation}\label{11}
{\cal E}^{V_n}(u,hv)={\cal
E}^{V_n}_1(\lambda(u,v)^{\triangle},h)+\frac{1}{2}\int_{V_n}
\tilde hd\tilde{\mu}^{(n)}_{\langle v,u^*\rangle}+\int_{V_n}
(u^*-u)hvdm.
\end{equation}
\end{thm}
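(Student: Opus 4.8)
The plan is to reduce the semi-Dirichlet computation to the symmetric reference form $(\tilde{\cal \eta}^{(n)},D({\cal E})_{V_n})$ through the operators $*$ and $\triangle$ of Lemma \ref{lemman1}, and then to invoke the energy-measure integration-by-parts identity for $\tilde{\cal \eta}^{(n)}$. Since $h,v\in D({\cal E})_{V_n,b}$, Assumption \ref{assum1q} (hence Assumption \ref{assum100}) gives $hv\in D({\cal E})_{V_n,b}\subset D({\cal E})_{V_n}$, so ${\cal E}^{V_n}(u,hv)$ is well defined. Using ${\cal E}^{V_n}_1={\cal E}^{V_n}+(\cdot,\cdot)_m$ and $\tilde{\cal \eta}^{(n)}_1=\tilde{\cal \eta}^{(n)}+(\cdot,\cdot)_m$ together with (\ref{1}) applied to $g=hv$, I would first write
$$
{\cal E}^{V_n}(u,hv)={\cal E}^{V_n}_1(u,hv)-\int_{V_n}uhv\,dm=\tilde{\cal \eta}^{(n)}_1(u^*,hv)-\int_{V_n}uhv\,dm=\tilde{\cal \eta}^{(n)}(u^*,hv)+\int_{V_n}(u^*-u)hv\,dm,
$$
which already produces the correction term $\int_{V_n}(u^*-u)hv\,dm$ of (\ref{11}).

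The heart of the matter is the symmetric identity $\tilde{\cal \eta}^{(n)}(u^*,hv)=\frac12\int_{V_n}\tilde v\,d\tilde{\mu}^{(n)}_{\langle h,u^*\rangle}+\frac12\int_{V_n}\tilde h\,d\tilde{\mu}^{(n)}_{\langle v,u^*\rangle}$. To obtain it I would start from the polarized defining relation of the energy measure of $\tilde{\cal \eta}^{(n)}$, namely $\int_{V_n}\tilde g\,d\tilde{\mu}^{(n)}_{\langle f_1,f_2\rangle}=\tilde{\cal \eta}^{(n)}(f_1,f_2 g)+\tilde{\cal \eta}^{(n)}(f_2,f_1 g)-\tilde{\cal \eta}^{(n)}(f_1 f_2,g)$, and apply it twice, with $(f_1,f_2,g)=(u^*,v,h)$ and with $(f_1,f_2,g)=(u^*,h,v)$. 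Adding the two relations, the cross terms $\tilde{\cal \eta}^{(n)}(v,u^*h)-\tilde{\cal \eta}^{(n)}(u^*h,v)$ and $\tilde{\cal \eta}^{(n)}(h,u^*v)-\tilde{\cal \eta}^{(n)}(u^*v,h)$ cancel by the symmetry of $\tilde{\cal \eta}^{(n)}$, leaving exactly $2\tilde{\cal \eta}^{(n)}(u^*,hv)$. Substituting this identity into the previous display and then rewriting the first term via the defining property of $\lambda(u,v)$ and (\ref{2}), i.e. $\frac12\int_{V_n}\tilde v\,d\tilde{\mu}^{(n)}_{\langle h,u^*\rangle}=\tilde{\cal \eta}^{(n)}_1(\lambda(u,v),h)={\cal E}^{V_n}_1(\lambda(u,v)^{\triangle},h)$, yields (\ref{11}).

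The main obstacle is that $u\in D({\cal E})_{V_n}$, so $u^*\in D({\cal E})_{V_n}$ need not be bounded, whereas the polarized energy-measure relation and the auxiliary products $u^*v,u^*h$ require bounded arguments lying in the domain. I would resolve this by the standard truncation procedure of \cite{CMS12}: replace $u^*$ by $u^*_N:=(-N)\vee u^*\wedge N\in D({\cal E})_{V_n,b}$, establish both applications of the defining relation and hence the symmetric identity for $u^*_N$ (all products now in $D({\cal E})_{V_n,b}$ by Assumption \ref{assum1q}), and then let $N\to\infty$. The required convergences $\tilde{\cal \eta}^{(n)}(u^*_N,hv)\to\tilde{\cal \eta}^{(n)}(u^*,hv)$ and $\int_{V_n}\tilde v\,d\tilde{\mu}^{(n)}_{\langle h,u^*_N\rangle}\to\int_{V_n}\tilde v\,d\tilde{\mu}^{(n)}_{\langle h,u^*\rangle}$ (and its analogue with $v,h$ interchanged) follow from $u^*_N\to u^*$ in $D({\cal E})_{V_n}$ together with the Kunita--Watanabe bound $|\int_{V_n}\tilde v\,d\tilde{\mu}^{(n)}_{\langle h,w\rangle}|\le\|\tilde v\|_\infty\,\tilde{\mu}^{(n)}_{\langle h\rangle}(V_n)^{1/2}\tilde{\mu}^{(n)}_{\langle w\rangle}(V_n)^{1/2}$ for the energy measures. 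This limiting step, which must be carried out with the estimate uniform in $h$ and is where the boundedness of $v$ and $h$ is used essentially, is the only delicate point of the argument.
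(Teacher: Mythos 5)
Your proof is correct and follows essentially the same route the paper intends: the paper proves this result by adapting \cite[Theorem 2.2]{CMS12}, i.e.\ reducing to the symmetric Dirichlet form $(\tilde{\eta}^{(n)},D({\cal E})_{V_n})$ via the operators $*$ and $\triangle$ of Lemma \ref{lemman1}, writing ${\cal E}^{V_n}(u,hv)=\tilde{\eta}^{(n)}(u^*,hv)+\int_{V_n}(u^*-u)hv\,dm$, and then invoking Nakao's polarization identity $\tilde{\eta}^{(n)}(u^*,hv)=\frac{1}{2}\int_{V_n}\tilde v\,d\tilde{\mu}^{(n)}_{\langle h,u^*\rangle}+\frac{1}{2}\int_{V_n}\tilde h\,d\tilde{\mu}^{(n)}_{\langle v,u^*\rangle}$ together with the definitions of $\lambda(u,v)$ and $\triangle$, exactly as you do. Your truncation argument for unbounded $u^*$ (with the Kunita--Watanabe bound controlling the energy-measure limits) is the same device that underlies the cited proof of Nakao's identity, so it supplies a detail the paper leaves to the reference rather than a genuinely different method.
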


Denote by $A_c^{n,+}$ the family of all PCAFs  of $X^{V_n}$.
Define $$A_c^{n,+,f}:=\{A\in A_c^{n,+}\,|\,{\rm the\ smooth\
measure},\ \mu_A,\ {\rm cossreponding\ to}\ A\ {\rm is\
finite}\}$$ and
\begin{eqnarray*}
{\cal N}^{n,*}_c:=&&\{N^{[u]}_t+\int_0^tf(X_s)ds+A^{(1)}_t-A^{(2)}_t\,|\,u\in D({\cal E})_{V_n},f\in L^2(V_n;m)\ {\rm and}\\
&&\ \ \ \ \ \ A^{(1)},A^{(2)}\in A_c^{n,+,f}\}.
\end{eqnarray*}
Note that any $C\in {\cal N}^{n,*}_c$ is finite and continuous on
$[0,\infty)$ $P_{x}{\textrm{-}{\rm a.s.}}$ for ${\cal
E}{\textrm{-}{\rm q.e.}}\ x\in E$. Similar to \cite[Theorem
2.2]{N}, we can prove the following lemma.
\begin{lem}\label{lem2} Let $\Upsilon$ be a finely open set such
that $\Upsilon\subset V_n$.  If $C^{(1)}, C^{(2)}\in {\cal
N}^{n,*}_c$ satisfying
$$
\lim_{t\downarrow 0}\frac{1}{t}E^{V_n}_{h\cdot
m}[C^{(1)}_t]=\lim_{t\downarrow 0} \frac{1}{t}E^{V_n}_{h\cdot
m}[C^{(2)}_t],\ \ \forall h\in D({\cal E})_{\Upsilon,b},
$$
then $C^{(1)}=C^{(2)}$ for $t\le \tau_{\Upsilon}$
$P^{V_n}_{x}{\textrm{-}a.s.}$ for ${\cal E}{\textrm{-}q.e.}\ x\in
V_n$.
\end{lem}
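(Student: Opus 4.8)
The plan is to reduce to a single functional, convert the hypothesis into a weak (distributional) equation, and then read off the bounded-variation representation of its zero-energy part. Set $C:=C^{(1)}-C^{(2)}$. Since $\mathcal{N}^{n,*}_c$ is a linear space (two zero-energy parts add via $N^{[u_1]}+N^{[u_2]}=N^{[u_1+u_2]}$, and the absolutely continuous and PCAF pieces add directly), we may write $C_t=N^{[u]}_t+\int_0^t f(X_s)\,ds+A^{(1)}_t-A^{(2)}_t$ with $u\in D(\mathcal{E})_{V_n}$, $f\in L^2(V_n;m)$ and $A^{(1)},A^{(2)}\in A_c^{n,+,f}$. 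The hypothesis becomes $\lim_{t\downarrow0}\frac1t E^{V_n}_{h\cdot m}[C_t]=0$ for all $h\in D(\mathcal{E})_{\Upsilon,b}$, and the goal is $C_t=0$ for $t\le\tau_\Upsilon$.

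First I would compute the mean-rate functional $h\mapsto\lim_{t\downarrow0}\frac1t E^{V_n}_{h\cdot m}[C_t]$ piece by piece. For a PCAF $A$ with Revuz measure $\mu_A$ one has $\lim_{t\downarrow0}\frac1t E^{V_n}_{h\cdot m}[A_t]=\int_{V_n}\tilde h\,d\mu_A$, so the difference $A^{(1)}-A^{(2)}$ contributes $\int_{V_n}\tilde h\,d\nu$ with $\nu:=\mu_{A^{(1)}}-\mu_{A^{(2)}}$ a finite signed smooth measure; the part $\int_0^t f(X_s)\,ds$ has Revuz measure $f\cdot m$ and contributes $(f,h)_m$; and for the zero-energy part, since the martingale in the part-process decomposition of Lemma \ref{thm3.6} has zero $P_x$-mean q.e., we get $E^{V_n}_{h\cdot m}[N^{[u]}_t]=(T^{V_n}_t u-u,h)_m$, whence $\lim_{t\downarrow0}\frac1t E^{V_n}_{h\cdot m}[N^{[u]}_t]=-\mathcal{E}^{V_n}(u,h)$ by the generator/form relation. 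Therefore the hypothesis is equivalent to the weak equation $\mathcal{E}^{V_n}(u,h)=(f,h)_m+\int_{V_n}\tilde h\,d\nu$ holding for every $h\in D(\mathcal{E})_{\Upsilon,b}$.

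Next I would interpret this identity as saying that the zero-energy additive functional $N^{[u]}$, observed up to $\tau_\Upsilon$, is in fact of bounded variation with the prescribed Revuz data. Invoking the semi-Dirichlet analogue of Fukushima's characterization of bounded-variation zero-energy CAFs, applied to the part process of $X^{V_n}$ on the finely open set $\Upsilon$ — which needs the identity precisely on the bounded class $D(\mathcal{E})_{\Upsilon,b}$ — this forces $N^{[u]}_t=-\int_0^t f(X_s)\,ds-(A^{(1)}_t-A^{(2)}_t)$ for $t\le\tau_\Upsilon$, $P^{V_n}_x$-a.s.\ for $\mathcal{E}$-q.e.\ $x\in V_n$. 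Substituting this back into the expression for $C$ gives $C_t=0$ for $t\le\tau_\Upsilon$; applied to $C^{(1)}-C^{(2)}$ this is exactly $C^{(1)}=C^{(2)}$ on $[0,\tau_\Upsilon]$.

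The main obstacle is the mean-rate formula for the zero-energy part together with the identification step, both in the non-symmetric setting. The difficulty is that the symmetric part $\tilde{\mathcal{E}}^{V_n}$ need not be a Dirichlet form, so the symmetric theory cannot be applied directly; instead I would run the argument through the comparable symmetric Dirichlet form $(\tilde\eta^{(n)},D(\mathcal{E})_{V_n})$ furnished by Assumption \ref{assum1q} and the correspondences (\ref{1})--(\ref{2}), exactly as in the passage from $\tilde{\mathcal{E}}^{V_n}$ to $\tilde\eta^{(n)}$ used to set up Nakao's integral. One must also check that restricting the test functions to $h$ vanishing q.e.\ off $\Upsilon$ yields control of $C$ only up to $\tau_\Upsilon$ (the localization that produces the stated conclusion), and that the limits defining the mean rate may be taken termwise — which is where the finiteness of $\nu$ and of $\mu_{A^{(i)}}$, together with the boundedness of $\widetilde{\hat h}$ on $V_n$, enter.
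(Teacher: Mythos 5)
Your proposal is correct and coincides with the route the paper itself takes: the paper proves this lemma only by the remark that it is ``similar to \cite[Theorem 2.2]{N}'', and Nakao's argument is exactly what you reconstruct --- reduce to $C=C^{(1)}-C^{(2)}$ by linearity, compute $\lim_{t\downarrow0}t^{-1}E^{V_n}_{h\cdot m}[C_t]$ termwise (zero mean of the martingale part together with the form--semigroup relation for $N^{[u]}$, and the Revuz formula for the drift and PCAF parts), and then conclude from the localized characterization of zero-energy CAFs of bounded variation that $N^{[u]}_t=-\int_0^t f(X_s)\,ds-(A^{(1)}_t-A^{(2)}_t)$ for $t\le\tau_{\Upsilon}$, whence $C=0$ there. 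The semi-Dirichlet adaptations you flag at the end (running the mean-rate and Revuz steps through \cite[Theorem A.8]{MMS} and \cite[Theorem 3.4]{f}, with the comparable Dirichlet form of Assumption \ref{assum1q} available wherever symmetry is used) are precisely the modifications the paper's citation to Nakao presupposes.
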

Note that $\tilde{\mu}^{(n)}_{\langle v,u^*\rangle}$ is a signed
smooth measure w.r.t. $(\tilde{\mathcal{
\eta}}^{(n)},D(\eta^{(n)}))$ and hence $({\cal E}^{V_n},D({\cal
E})_{V_n})$ by Assumption \ref{assum1q}. We use $G(u,v)$ to denote
the unique element in $A_c^{n,+}-A_c^{n,+}$ that is corresponding
to $\tilde{\mu}^{(n)}_{\langle v,u^*\rangle}$ under the Revuz
correspondence between smooth measures of $({\cal E}^{V_n},D({\cal
E})_{V_n})$ and PCAFs of $X^{V_n}$ (cf. \cite[Theorem A.8]{MMS}).
To simplify notations, we define
$$
\Gamma(u,v)_t:=N^{[\lambda(u,v)^{\triangle}]}_t-\int_0^t\lambda(u,v)^{\triangle}(X^{V_n}_s)ds,\ \ t\ge0.
$$

\begin{defin}\label{00s} Let $u\in D({\cal E})_{V_n}$ and $v\in D({\cal E})_{V_n,b}$. We define for $t\ge 0$,
\begin{eqnarray*}
\int_0^t\tilde v(X^{V_n}_{s-})dN^{n,[u]}_s&:=&\int_0^t\tilde v(X^{V_n}_s)dN^{n,[u]}_s\\
&:=&\Gamma(u,v)_t-\frac{1}{2}G(u,v)_t-\int_0^t(u^*-u)v(X^{V_n}_s)ds.
\end{eqnarray*}
\end{defin}

\begin{rem}\label{rem1}
Let $u\in D({\cal E})_{V_n}$ and $v\in D({\cal E})_{V_n,b}$. Then
one can check that $\int_0^t\tilde v(X^{V_n}_s)dN^{n,[u]}_s\in
{\cal N}^{n,*}_c$. By Definition \ref{00s}, (\ref{sdf}),
\cite[Theorem 3.4]{f}, \cite[Theorem A.8(iii)]{MMS} and
(\ref{11}), we obtain that
$$
\lim_{t\downarrow 0}\frac{1}{t}E^{V_n}_{h\cdot
m}\left[\int_0^t\tilde v(X^{V_n}_s)dN^{[u],n}_s\right]=-{\cal
E}^{V_n}(u,hv).
$$
Therefore, by Lemma \ref{lem2}, $\int_0^t\tilde
v(X^{V_n}_s)dN^{n,[u]}_s$ is the unique AF $(C_t)_{t\ge 0}$ in
${\cal N}^{n,*}_c$ that satisfies $\lim_{t\downarrow
0}\frac{1}{t}E^{V_n}_{h\cdot m}[C_t]=-{\cal E}^{V_n}(u,hv)$.
\end{rem}

Denote by $(L^{V_n},D(L^{V_n}))$ the generator of $({\cal
E}^{V_n},D({\cal E})_{V_n})$. Note that if $u\in D(L^{V_n})$ then
$dN^{n,[u]}_s=L^{V_n}u(X^{V_n}_s)ds$.  In this case, it is easy to
see that for any $v,h\in D({\cal E})_{V_n,b}$,
$$
\lim_{t\downarrow 0}\frac{1}{t}E^{V_n}_{h\cdot m}\left[\int_0^tv(X^{V_n}_s)L^{V_n}u(X^{V_n}_s)ds\right]=\int_{V_n}hvL^{V_n}udm=-{\cal E}^{V_n}(u,hv)
$$
(cf. \cite[Theorem A.8(vi)]{MMS}).  Hence our definition of the
stochastic integral $\int_0^t\tilde v(X^{V_n}_s)dN^{n,[u]}_s$ for
$u\in D({\cal E})_{V_n}$ is an extension of the ordinary Lebesgue
integral $\int_0^t\tilde v(X_s^{V_n})L^{V_n}u(X^{V_n}_s)ds$ for
$u\in D(L^{V_n})$. More generally, similar to \cite[Proposition
2.6]{CMS12}, we can prove the following proposition.
\begin{prop}\label{pro2.7} Let $u\in D({\cal E})_{V_n}$, $v\in D({\cal E})_{V_n,b}$ and $\Upsilon$ be a finely open set such
that $\Upsilon\subset V_n$. Suppose that there exist
$A^{(1)},A^{(2)}\in A_c^{n,+}$ such that
$N^{n,[u]}_t=A^{(1)}_t-A^{(2)}_t$ for $t<\tau_{\Upsilon}$ (resp.
all $t\ge 0$). Then
$$
\int_0^t\tilde v(X^{V_n}_s)dN^{n,[u]}_s=\int_0^t\tilde
v(X^{V_n}_s)d(A^{(1)}_s-A^{(2)}_s)\ {\rm for}\
t\le\tau_{\Upsilon}\ ({\rm resp.\ all}\ t\ge 0)
$$
$P^{V_n}_{x}{\textrm{-}a.s.}$ for ${\cal E}{\textrm{-}q.e.}\ x\in
V_n$.
\end{prop}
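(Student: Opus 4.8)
The plan is to identify both sides of the asserted equality as elements of ${\cal N}^{n,*}_c$ and then to separate them by their short-time mean behaviour, invoking the uniqueness Lemma \ref{lem2}. By Remark \ref{rem1}, the left-hand side $\int_0^t\tilde v(X^{V_n}_s)\,dN^{n,[u]}_s$ is the \emph{unique} additive functional in ${\cal N}^{n,*}_c$ whose Revuz-type derivative is $-{\cal E}^{V_n}(u,hv)$; concretely,
$$
\lim_{t\downarrow 0}\frac{1}{t}E^{V_n}_{h\cdot m}\left[\int_0^t\tilde v(X^{V_n}_s)\,dN^{n,[u]}_s\right]=-{\cal E}^{V_n}(u,hv),\quad \forall h\in D({\cal E})_{V_n,b}.
$$
So it suffices to show that the ordinary integral $C_t:=\int_0^t\tilde v(X^{V_n}_s)\,d(A^{(1)}_s-A^{(2)}_s)$ lies in ${\cal N}^{n,*}_c$ and produces the \emph{same} limit for all $h\in D({\cal E})_{\Upsilon,b}$; Lemma \ref{lem2} will then force $C_t=\int_0^t\tilde v(X^{V_n}_s)\,dN^{n,[u]}_s$ for $t\le\tau_\Upsilon$.

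To compute the derivative of $C$, let $\mu:=\mu_{A^{(1)}}-\mu_{A^{(2)}}$ be the signed smooth measure attached to $A^{(1)}-A^{(2)}$ under the Revuz correspondence of \cite[Theorem A.8]{MMS}. Since $\tilde v$ is bounded, the Revuz measure of $C$ is $\tilde v\,\mu$, whence
$$
\lim_{t\downarrow 0}\frac{1}{t}E^{V_n}_{h\cdot m}[C_t]=\int_{V_n}\tilde h\,\tilde v\,d\mu=\int_{V_n}\widetilde{hv}\,d\mu,\quad h\in D({\cal E})_{\Upsilon,b}.
$$
On $\Upsilon$ we have $N^{n,[u]}=A^{(1)}-A^{(2)}$, so $\mu$ is precisely the Revuz measure of the zero-energy part $N^{n,[u]}$ there; by the bounded-variation characterization of $N^{n,[u]}$ (the present-setting analogue of the classical relation between the zero-energy part and the form, cf. \cite[Theorem 5.4.2]{Fu94}) its Revuz measure represents $-{\cal E}^{V_n}(u,\cdot)$, i.e. $\int_{V_n}\tilde g\,d\mu=-{\cal E}^{V_n}(u,g)$ for every $g\in D({\cal E})_{\Upsilon,b}$. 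Because $h\in D({\cal E})_{\Upsilon,b}\subset D({\cal E})_{V_n,b}$ and $v\in D({\cal E})_{V_n,b}$, Assumption \ref{assum1q} (via Assumption \ref{assum100}) gives $hv\in D({\cal E})$, bounded and supported in $\Upsilon$, hence $hv\in D({\cal E})_{\Upsilon,b}$; taking $g=hv$ yields $\int_{V_n}\widetilde{hv}\,d\mu=-{\cal E}^{V_n}(u,hv)$. Thus the two derivatives coincide and Lemma \ref{lem2} applies.

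The step I expect to be the main obstacle is ensuring $C\in{\cal N}^{n,*}_c$, since the hypothesis only places $A^{(1)},A^{(2)}$ in $A_c^{n,+}$ and their Revuz measures may be infinite, whereas membership in ${\cal N}^{n,*}_c$ requires the bounded-variation pieces to sit in $A_c^{n,+,f}$. I would remove this defect by localization: choose finely open sets $\Upsilon_k\uparrow\Upsilon$ with $\tau_{\Upsilon_k}\uparrow\tau_\Upsilon$ on which the smooth measures $\mu_{A^{(1)}},\mu_{A^{(2)}}$ are finite (using a common nest for these measures), and stop the PCAFs at $\tau_{\Upsilon_k}$. Since $\tilde v$ is bounded, the components $\int_0^{\cdot}\tilde v^{\pm}(X^{V_n}_s)\,dA^{(i)}_{s\wedge\tau_{\Upsilon_k}}$ then have finite Revuz measures, so the stopped process $C^{\tau_{\Upsilon_k}}$ genuinely lies in ${\cal N}^{n,*}_c$; the preceding argument gives equality for $t\le\tau_{\Upsilon_k}$, and letting $k\to\infty$ yields the claim for $t\le\tau_\Upsilon$. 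The unrestricted case ``$t\ge 0$'' is recovered by taking $\Upsilon=V_n$, so that $\tau_\Upsilon$ becomes the lifetime of $X^{V_n}$. Apart from this localization and the routine Revuz-correspondence bookkeeping, the argument follows \cite[Proposition 2.6]{CMS12}.
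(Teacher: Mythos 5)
Your argument is essentially the paper's own: the paper proves this proposition only by invoking \cite[Proposition 2.6]{CMS12}, and that method is exactly your scheme --- realize both sides as elements of ${\cal N}^{n,*}_c$, compute $\lim_{t\downarrow 0}\frac{1}{t}E^{V_n}_{h\cdot m}[\,\cdot\,]$ for $h\in D({\cal E})_{\Upsilon,b}$ via the Revuz correspondence and Remark \ref{rem1}, and conclude with the uniqueness Lemma \ref{lem2}. The only repair needed is in your localization step: a stopped process $A^{(i)}_{t\wedge\tau_{\Upsilon_k}}$ is not an additive functional of $X^{V_n}$ and so is not literally eligible for membership in ${\cal N}^{n,*}_c$; replace it by the PCAF $\int_0^t 1_{\Upsilon_k}(X^{V_n}_s)\,dA^{(i)}_s$, which has finite Revuz measure $1_{\Upsilon_k}\cdot\mu_{A^{(i)}}$ and coincides with $A^{(i)}$ on $[0,\tau_{\Upsilon_k})$.
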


\begin{thm}\label{pb} Let $v\in D({\cal E})_{V_n,b}$ and $\{u_k\}_{k=0}^{\infty}\subset D({\cal E})_{V_n}$
satisfying $u_k$ converges to $u_0$ w.r.t. the $\tilde{\cal E}^{1/2}_1$-norm as $k\rightarrow\infty$.
Then there exists a subsequence $\{k'\}$ such that for ${\cal E}{\textrm{-}q.e.}$ $x\in V_n$,
\begin{eqnarray*}
& &P^{V_n}_x(\lim_{k'\rightarrow\infty}\int_0^{t}\tilde
v(X^{V_n}_s)dN^{n,[u_{k'}]}_s
=\int_0^{t}\tilde v(X^{V_n}_s)dN^{n,[u_0]}_s\\
& & \ \ \ \ \ \ \ \ \ \ \ \ \ \ {\rm uniformly\ on\ any\ finite\
interval\ of}\ t)=1.
\end{eqnarray*}
\end{thm}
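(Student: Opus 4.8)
The plan is to exploit the linearity of the Nakao integral in its integrator and reduce the statement to a convergence-to-zero assertion. By Definition~\ref{00s} the integral $\int_0^t\tilde v(X^{V_n}_s)dN^{n,[u]}_s$ is assembled from $\Gamma(u,v)_t=N^{[\lambda(u,v)^\triangle]}_t-\int_0^t\lambda(u,v)^\triangle(X^{V_n}_s)ds$, from $G(u,v)_t$, and from $\int_0^t(u^*-u)v(X^{V_n}_s)ds$. Each of the maps $u\mapsto u^*$ and $u\mapsto(\cdot)^\triangle$ given by (\ref{1}) and (\ref{2}) is linear, the map $u\mapsto\lambda(u,v)$ is linear, the Revuz correspondence $u\mapsto G(u,v)$ is linear, and $u\mapsto N^{n,[u]}$ is linear; hence $u\mapsto\int_0^t\tilde v(X^{V_n}_s)dN^{n,[u]}_s$ is linear in $u$. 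Writing $w_k:=u_k-u_0$, which tends to $0$ in the $\tilde{\cal E}^{1/2}_1$-norm, it therefore suffices to extract a subsequence $\{k'\}$ along which $\int_0^t\tilde v(X^{V_n}_s)dN^{n,[w_{k'}]}_s\to0$ uniformly on finite $t$-intervals, $P^{V_n}_x$-a.s. for ${\cal E}$-q.e. $x\in V_n$.

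First I would check that $w_k\to0$ propagates to all the auxiliary objects. From (\ref{1}),(\ref{2}), the weak sector condition of $({\cal E},D({\cal E}))$, and the norm equivalence $\frac1{C_n}\tilde\eta^{(n)}_1\le\tilde{\cal E}_1\le C_n\tilde\eta^{(n)}_1$ (which follows from Assumption~\ref{assum1q} by restriction to the diagonal), one shows that $u\mapsto u^*$ and $u\mapsto(\cdot)^\triangle$ are bounded linear operators on $(D({\cal E})_{V_n},\tilde{\cal E}^{1/2}_1)$; and, using the Cauchy--Schwarz inequality for energy measures together with the Riesz representation defining $\lambda(u,v)$, that $\lambda(\cdot,v)$ is bounded on this space with norm controlled by $\|v\|_\infty$. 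Consequently $w_k^*\to0$, $w_k^*-w_k\to0$, and $\lambda(w_k,v)^\triangle\to0$, all in the ${\cal E}_1$-norm and hence in $L^2(V_n;m)$, while the signed energy measure satisfies $|\tilde\mu^{(n)}_{\langle v,w_k^*\rangle}|(V_n)\le2\,\tilde\mu^{(n)}_{\langle v\rangle}(V_n)^{1/2}\tilde\mu^{(n)}_{\langle w_k^*\rangle}(V_n)^{1/2}\to0$ as its total variation shrinks with the energy of $w_k^*$.

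The remaining work converts these functional convergences into uniform-in-$t$ convergence of the four additive functionals. The three terms $\int_0^t\lambda(w_k,v)^\triangle(X^{V_n}_s)ds$, $\int_0^t(w_k^*-w_k)v(X^{V_n}_s)ds$ and $G(w_k,v)_t$ are all of bounded variation, so by the estimate \cite[Lemma A.9]{MMS} applied on $V_n$ (with $\nu\in\hat S^{*}_{00}$, $\nu(E)<\infty$, exactly as in the proof of Lemma~\ref{lem2.3}) one gets $E_\nu[\sup_{t\le T}|\cdot|]\le C_\nu(1+T)\int_{V_n}\widetilde{\hat h}\,d|\mu|$, where $|\mu|$ is the total variation of the relevant Revuz measure, namely $|w_k^*-w_k||v|\,m$, $|\lambda(w_k,v)^\triangle|\,m$, or $|\tilde\mu^{(n)}_{\langle v,w_k^*\rangle}|$; since $\widetilde{\hat h}$ is bounded on $V_n$ and $m(V_n)$ is handled through $\nu$, each of these tends to $0$, giving convergence in $L^1(P_\nu)$ of the suprema. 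For the zero-energy CAF $N^{[\lambda(w_k,v)^\triangle]}$, which is not of bounded variation, I would instead invoke the continuity of the Fukushima-type decomposition for the part form $({\cal E}^{V_n},D({\cal E})_{V_n})$ (the analog of \cite[Theorem 5.2.2]{Fu94} used in \cite{CMS12}): since $\lambda(w_k,v)^\triangle\to0$ in ${\cal E}_1$, a subsequence of $N^{[\lambda(w_k,v)^\triangle]}$ converges to $0$ uniformly on finite intervals, $P^{V_n}_x$-a.s. for q.e. $x$. Extracting a single diagonal subsequence $\{k'\}$ yields simultaneous a.s. uniform convergence of all four pieces under $P_\nu$ (equivalently $P_{h\cdot m}$), and I would upgrade this to $P^{V_n}_x$ for ${\cal E}$-q.e. $x$ via the semi-Dirichlet analog of \cite[Lemma 5.1.10]{Fu94}.

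The main obstacle I expect is twofold. First, one needs a genuine \emph{continuity} statement for $u\mapsto N^{n,[u]}$ for the part form, not merely its existence from Lemma~\ref{thm3.6}, and in the semi-Dirichlet setting this must be obtained through the comparison Dirichlet form $(\tilde\eta^{(n)},D(\eta^{(n)}))$ rather than through the symmetric part of ${\cal E}^{V_n}$, which need not be a Dirichlet form. Second, because no dual process is available, the Revuz/energy control of the bounded-variation pieces must be routed through the class $\hat S^{*}_{00}$ and the $\widetilde{\hat h}$-weighted estimate of \cite[Lemma A.9]{MMS} rather than the usual symmetric machinery; keeping these bounds uniform in $k'$, so that one diagonal subsequence serves all four terms at once, is the delicate bookkeeping point.
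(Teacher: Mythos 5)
Your proposal is correct and follows essentially the same route as the paper: decompose the integral into its four pieces via Definition \ref{00s}, use linearity to reduce to $w_k=u_k-u_0\to 0$, control the bounded-variation terms (in particular $G(w_k,v)$) through the class $\hat S^{n,*}_{00}$, the weighted Revuz estimate of \cite[Lemma A.9]{MMS} and Cauchy--Schwarz for the energy measures of the comparison form $\tilde\eta^{(n)}$, handle the zero-energy part by the continuity of the Fukushima-type decomposition as in \cite[Theorem 2.7]{CMS12} with \cite[Lemmas 2.5 and A.6]{MMS}, and extract a q.e.\ uniformly convergent subsequence by the Borel--Cantelli method of \cite[Lemma 5.1.2]{Fu94}. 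The paper writes out only the $G(u_k,v)$ term and delegates the other three to \cite{CMS12}, so your fuller term-by-term bookkeeping (including the boundedness of $u\mapsto u^*$, $u\mapsto u^{\triangle}$ and $\lambda(\cdot,v)$) is a faithful expansion of the same argument rather than a different one.
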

\begin{proof} By Definition \ref{00s}, we have
\begin{eqnarray*}
\int_0^{t}\tilde
v(X^{V_n}_s)dN^{n,[u_k]}_s&=&N^{n,[\lambda(u_k,v)^{\triangle}]}_{t}
-\int_0^{t}\lambda(u_k,v)^{\triangle}(X^{V_n}_s)ds\\
& &-\frac{1}{2}G(u_k,v)_{t}-\int_0^{t}(u_k^*-u_k)v(X^{V_n}_s)ds.
\end{eqnarray*}
For each term of the right hand side of the above equation, we can
prove that  there exists a subsequence which converges uniformly
on\ any\ finite\ interval\ of $t$. Below we will only give the
proof for the convergence of the third term. The convergence of
the other three terms can be proved similar to \cite[Theorem
2.7]{CMS12} by virtue of \cite[Lemmas 2.5 and A.6]{MMS}.

Recall that for $u\in D({\cal E})_{V_n}$ and $v\in D({\cal
E})_{{V_n},b}$, $G(u,v)$ denotes the unique element in
$A_c^{n,+}-A_c^{n,+}$ that is corresponding to
$\tilde{\mu}^{(n)}_{\langle v,u^*\rangle}$ under the Revuz
correspondence between smooth measures of $({\cal E}^{V_n},D({\cal
E})_{V_n})$ and PCAFs of $X^{V_n}$. We use  $G^+(u,v)$ and
$G^-(u,v)$ to denote the PCAFs corresponding to
$\tilde{\mu}^{(n),+}_{\langle v,u^*\rangle}$ and
$\tilde{\mu}^{(n),-}_{\langle v,u^*\rangle}$, respectively.

 Define
$$
\hat{S}^{n,*}_{00}:=\{\mu\in S_{0}^n\,|\,\hat{U}^{V_n}_{1}\mu\leq
c\hat{G}_1^{V_n}\phi\ \mbox{for some constant}\ c>0\},
$$
where $S_0^n$ and $\hat{U}^{V_n}_{1}\mu$ denote respectively the
family of positive measures of finite energy integral and
1-co-potential relative to
$(\mathcal{E}^{V_n},D(\mathcal{E})_{V_n})$. By \cite[Theorem
A.3]{MMS}, for $A\in {\cal B}(E)$, if $\mu(A)=0$ for all $\mu\in
\hat{S}^{n,*}_{00}$ then $cap_{\phi}(A) = 0$.

 Let $\nu\in \hat{S}^{n,*}_{00}$. Then, there exists a positive constant
$C_{\nu}>0$ such that (cf. \cite[Lemma A.9]{MMS})
\begin{eqnarray*}
& &E^{V_n}_{\nu}\left[\sup_{0\le s\le t}|G(u_k,v)_s-G(u,v)_s|\right]\\
&&\ \ \ \ \ \ \ \ =E^{V_n}_{\nu}\left[\sup_{0\le s\le t}|G(u_k-u,v)_s|\right]\\
&&\ \ \ \ \ \ \ \ \le E^{V_n}_{\nu}[|G^+(u_k-u,v)_{t}|]+E^{V_n}_{\nu}[|G^-(u_k-u,v)_{t}|]\\
&&\ \ \ \ \ \ \ \ \le(1+t)C_{\nu}\int_{V_n}\widetilde{\bar{h}_n}d|\tilde{\mu}^{(n)}_{\langle v,(u_k-u)^*\rangle}|\\
&&\ \ \ \ \ \ \ \
\le(1+t)C_{\nu}\left(\int_{V_n}\widetilde{\bar{h}_n}^2d\tilde{\mu}^{(n)}_{\langle
v\rangle}\right)^{\frac{1}{2}}
\left(\int_{V_n}d\tilde{\mu}^{(n)}_{\langle (u_k-u)^*\rangle}\right)^{\frac{1}{2}}\\
&&\ \ \ \ \ \ \ \
\le2(1+t)C_{\nu}\|\widetilde{\bar{h}_n}\|_{\infty}(\eta^{(n)}(v,v))^{\frac{1}{2}}(\eta^{(n)}((u_k-u)^*,(u_k-u)^*))^{\frac{1}{2}},
\end{eqnarray*}
which converges to 0 as $k\rightarrow\infty$. The proof is
completed by the same method used in the proof of \cite[Lemma
5.1.2]{Fu94} (cf. \cite[Theorem 2.3.8]{oshima}).
\end{proof}

Similar to \cite[Proposition 2.6 and Corollary 3.2]{CMS12}, we can
prove the following propositions.
\begin{prop}\label{pa} Let $u,v\in D({\cal E})_{V_n,b}$. Then
$$
\int_0^t\tilde v(X^{V_n}_s)dN^{n,[u]}_s+\int_0^t\tilde
u(X^{V_n}_s)dN^{n,[v]}_s=N^{n,[uv]}_t-\langle
M^{n,[u]},M^{n,[v]}\rangle_t,\ \ t\ge 0,
$$
$P^{V_n}_{x}{\textrm{-}a.s.}$ for ${\cal E}{\textrm{-}q.e.}\ x\in
V_n$.
\end{prop}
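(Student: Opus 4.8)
The plan is to prove the identity by showing that both sides of the asserted equation belong to the class ${\cal N}^{n,*}_c$ and determine the same Revuz measure, and then to invoke the uniqueness Lemma \ref{lem2}. Since $u,v\in D({\cal E})_{V_n,b}$, Assumption \ref{assum1q} (and hence Assumption \ref{assum100}) gives $uv\in D({\cal E})_{V_n}$, so that $N^{n,[uv]}$ and the two Nakao integrals are all well defined via Definition \ref{00s} and Lemma \ref{thm3.6}. By Remark \ref{rem1} each Nakao integral lies in ${\cal N}^{n,*}_c$, and since ${\cal N}^{n,*}_c$ is a linear space the left-hand side lies in ${\cal N}^{n,*}_c$. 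On the right-hand side, $N^{n,[uv]}\in{\cal N}^{n,*}_c$ by definition, while the mutual quadratic variation decomposes as $\langle M^{n,[u]},M^{n,[v]}\rangle=\frac14(\langle M^{n,[u+v]}\rangle-\langle M^{n,[u-v]}\rangle)$, a difference of PCAFs whose Revuz measures are the energy measures of the bounded functions $u\pm v$; these are finite and smooth because $u\pm v\in D({\cal E})_{V_n,b}$ satisfy Condition (S), as already exploited in the proof of Theorem \ref{thm3.2}. Hence the right-hand side also lies in ${\cal N}^{n,*}_c$, after the customary restriction to a finely open $\Upsilon\subset V_n$ on which these measures are finite, if necessary.

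Next I would compute the Revuz measure, that is the limit $\lim_{t\downarrow0}\frac1t E^{V_n}_{h\cdot m}[\,\cdot\,]$, of each side for $h\in D({\cal E})_{\Upsilon,b}$. For the left-hand side, Remark \ref{rem1} immediately yields the value $-{\cal E}^{V_n}(u,hv)-{\cal E}^{V_n}(v,hu)$. For the right-hand side, writing $\widetilde{uv}(X^{V_n}_t)-\widetilde{uv}(X^{V_n}_0)=M^{n,[uv]}_t+N^{n,[uv]}_t$ and using $E_x[M^{n,[uv]}_t]=0$ together with the generator relation $\frac1t(T^{V_n}_t(uv)-uv,h)_m\to-{\cal E}^{V_n}(uv,h)$ gives $\lim_{t\downarrow0}\frac1t E^{V_n}_{h\cdot m}[N^{n,[uv]}_t]=-{\cal E}^{V_n}(uv,h)$, while by the very definition of the Revuz measure $\lim_{t\downarrow0}\frac1t E^{V_n}_{h\cdot m}[\langle M^{n,[u]},M^{n,[v]}\rangle_t]=\int_{V_n}\tilde h\,d\mu^{(n)}_{\langle u,v\rangle}$, where $\mu^{(n)}_{\langle u,v\rangle}$ denotes the Revuz measure of $\langle M^{n,[u]},M^{n,[v]}\rangle$. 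Thus the two Revuz measures coincide if and only if
\begin{equation}\label{leibnizid}
\int_{V_n}\tilde h\,d\mu^{(n)}_{\langle u,v\rangle}={\cal E}^{V_n}(u,hv)+{\cal E}^{V_n}(v,hu)-{\cal E}^{V_n}(uv,h),\qquad h\in D({\cal E})_{\Upsilon,b}.
\end{equation}

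The main obstacle is the energy-measure identity (\ref{leibnizid}). I would establish it in the usual two steps. First, for a single $u\in D({\cal E})_{V_n,b}$, one proves the Fukushima-type formula
\[
\int_{V_n}\tilde h\,d\mu^{(n)}_{\langle u\rangle}=2{\cal E}^{V_n}(u,hu)-{\cal E}^{V_n}(u^2,h),\qquad h\in D({\cal E})_{V_n,b},
\]
for the Revuz measure of $\langle M^{n,[u]}\rangle$; polarizing this in $u$ produces (\ref{leibnizid}). The delicate point is that $\mu^{(n)}_{\langle u,v\rangle}$ depends only on the symmetric part $\tilde{\cal E}^{V_n}$ of the form, whereas the right-hand side of (\ref{leibnizid}) involves the full non-symmetric ${\cal E}^{V_n}$; one must therefore check that the antisymmetric (drift-type) contribution ${\cal E}^{V_n}(u,hv)+{\cal E}^{V_n}(v,hu)-{\cal E}^{V_n}(uv,h)$ restricted to the antisymmetric part of ${\cal E}^{V_n}$ vanishes. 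This cancellation follows from the derivation property of the antisymmetric part, which can be read off from the structural formula (\ref{11}) applied to the pairs $(u,v)$ and $(v,u)$ and the corresponding relation for $uv$, exactly as in the non-symmetric Dirichlet form case treated in \cite{CMS12}. Having matched the Revuz measures through (\ref{leibnizid}), Lemma \ref{lem2} gives that the two sides agree for $t\le\tau_{\Upsilon}$ $P^{V_n}_x$-a.s. for ${\cal E}$-q.e. $x\in V_n$; taking $\Upsilon=V_n$ (quasi-open, hence finely open q.e.) upgrades this to all $t\ge0$, which completes the proof.
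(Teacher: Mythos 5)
Your overall architecture --- show both sides belong to ${\cal N}^{n,*}_c$, compute $\lim_{t\downarrow0}\frac1t E^{V_n}_{h\cdot m}[\,\cdot\,]$ for each side, and conclude by the uniqueness Lemma \ref{lem2} --- is exactly the Nakao-type route the paper intends (its proof of Proposition \ref{pa} is deferred to the argument of \cite{CMS12}), and your reduction of the whole proposition to the Leibniz-type identity $\int_{V_n}\tilde h\,d\mu^{(n)}_{\langle u,v\rangle}={\cal E}^{V_n}(u,hv)+{\cal E}^{V_n}(v,hu)-{\cal E}^{V_n}(uv,h)$ correctly isolates the crux. The bookkeeping (polarization $\langle M^{n,[u]},M^{n,[v]}\rangle=\frac14(\langle M^{n,[u+v]}\rangle-\langle M^{n,[u-v]}\rangle)$, localization to a finely open $\Upsilon$, recovery of all $t\ge0$ since every AF involved is stopped at the lifetime of $X^{V_n}$) is sound.

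The gap is in your proposed proof of that key identity. You claim the required cancellation ``follows from the derivation property of the antisymmetric part, which can be read off from (\ref{11})''; neither half of this is right. Formula (\ref{11}) is the structural identity defining $\lambda(u,v)^{\triangle}$ through the auxiliary symmetric Dirichlet form $\tilde\eta^{(n)}$ and encodes no derivation property of $\check{\cal E}^{V_n}(f,g):=\frac12({\cal E}^{V_n}(f,g)-{\cal E}^{V_n}(g,f))$. More seriously, the antisymmetric contribution does \emph{not} vanish on its own: for a drift form ${\cal E}^{dr}(f,g)=\int\langle b,\nabla f\rangle g\,dx$ a direct computation gives $\check{\cal E}(u,hv)+\check{\cal E}(v,hu)-\check{\cal E}(uv,h)=-\frac12\int\langle b,\nabla(uvh)\rangle\,dx=\frac12\int({\rm div}\,b)\,uvh\,dx$, which is nonzero unless $b$ is divergence free --- and Example \ref{e2} of the paper admits exactly such drifts. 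The actual cancellation is between this term and the zero-order (divergence/killing-type) portion of the symmetric part: only the \emph{full} combination ${\cal E}^{V_n}(u,hv)+{\cal E}^{V_n}(v,hu)-{\cal E}^{V_n}(uv,h)$ annihilates first-order terms. Likewise, ``$\mu^{(n)}_{\langle u,v\rangle}$ depends only on the symmetric part $\tilde{\cal E}^{V_n}$'' is not usable here, because in the semi-Dirichlet setting $\tilde{\cal E}^{V_n}$ is merely a positivity-preserving form with no associated Markov process --- this is precisely why the paper substitutes $\tilde\eta^{(n)}$ for it. The repair is to abandon the symmetric/antisymmetric splitting and prove the identity with the full non-symmetric form, as in \cite{CMS12} and the symmetric prototype \cite[Lemma 3.2.5]{Fu94}: one has
$$
\frac1t E^{V_n}_{h\cdot m}\bigl[A^{n,[u]}_tA^{n,[v]}_t\bigr]
=\frac1t\Bigl[(T^{V_n}_t(uv)-uv,h)_m-(T^{V_n}_tv-v,hu)_m-(T^{V_n}_tu-u,hv)_m\Bigr],
$$
which converges to $-{\cal E}^{V_n}(uv,h)+{\cal E}^{V_n}(v,hu)+{\cal E}^{V_n}(u,hv)$ by \cite[Theorem 3.4]{f} (note $hu,hv\in D({\cal E})_{V_n}$ by Assumption \ref{assum1q}); one then checks that the cross terms involving the zero-energy parts $N^{n,[u]},N^{n,[v]}$ vanish in the limit, so the left side may be replaced by $\frac1t E^{V_n}_{h\cdot m}[\langle M^{n,[u]},M^{n,[v]}\rangle_t]$, yielding the identity with no decomposition of ${\cal E}^{V_n}$ needed. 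With that step corrected, the rest of your argument stands.
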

\begin{prop}\label{cor1} Let $u\in D({\cal E})_{{V_n},b}$ and $\{v_k\}_{k=0}^{\infty}\subset D({\cal E})_{{V_n},b}$ satisfying $v_k$ converges to $v_0$ w.r.t. the $\|\cdot\|_{\infty}$-norm and the $\tilde{\cal E}^{1/2}_1$-norm as $k\rightarrow\infty$. Then there exists a subsequence $\{k'\}$ such that for ${\cal E}{\textrm{-}q.e.}$ $x\in V_n$,
\begin{eqnarray*}
&
&P^{V_n}_x(\lim_{k'\rightarrow\infty}\int_0^{t}\widetilde{v_{k'}}(X^{V_n}_s)dN^{n,[u]}_s
=\int_0^{t}{\widetilde v_0}(X^{V_n}_s)dN^{n,[u]}_s\\
& & \ \ \ \ \ \ \ \ \ \ \ \ \ \ {\rm uniformly\ on\ any\ finite\
interval\ of}\ t)=1.
\end{eqnarray*}
\end{prop}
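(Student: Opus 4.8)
The plan is to express the stochastic integral through the integration by parts identity of Proposition \ref{pa} and then pass to the limit term by term. Fix $u\in D({\cal E})_{V_n,b}$. Applying Proposition \ref{pa} with $v=v_k$ gives, for every $k$ and $t\ge0$,
$$
\int_0^t\widetilde{v_k}(X^{V_n}_s)dN^{n,[u]}_s=N^{n,[uv_k]}_t-\langle M^{n,[u]},M^{n,[v_k]}\rangle_t-\int_0^t\tilde u(X^{V_n}_s)dN^{n,[v_k]}_s,
$$
$P^{V_n}_x$-a.s. for ${\cal E}$-q.e. $x\in V_n$, and likewise for $v_0$. Hence it suffices to produce a single subsequence $\{k'\}$ along which each of the three terms on the right converges, uniformly in $t$ on finite intervals and $P^{V_n}_x$-a.s. for q.e. $x$, to its $v_0$-counterpart; since only three terms are involved, a common subsequence is obtained by successive extraction, and letting $k'\to\infty$ and invoking Proposition \ref{pa} for $v_0$ then yields the claim.

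Two of the three terms are handled by results already in place. The last term is covered directly by Theorem \ref{pb}: viewing $u$ as the fixed bounded integrand and $\{v_k\}$ as the sequence of generating functions, the hypothesis $v_k\to v_0$ in the $\tilde{\cal E}^{1/2}_1$-norm furnishes a subsequence along which $\int_0^t\tilde u(X^{V_n}_s)dN^{n,[v_k]}_s\to\int_0^t\tilde u(X^{V_n}_s)dN^{n,[v_0]}_s$ uniformly on finite intervals. For the covariation term, set $w_k:=v_k-v_0$, so that by bilinearity $\langle M^{n,[u]},M^{n,[v_k]}\rangle-\langle M^{n,[u]},M^{n,[v_0]}\rangle=\langle M^{n,[u]},M^{n,[w_k]}\rangle$. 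Exactly as in the estimate of $G(u_k,v)$ in the proof of Theorem \ref{pb}, I would bound the supremum of $|\langle M^{n,[u]},M^{n,[w_k]}\rangle|$ by the total variation of the covariation, apply the Kunita--Watanabe inequality to the energy measures and \cite[Lemma A.9]{MMS} to obtain, for $\nu\in\hat S^{n,*}_{00}$,
$$
E^{V_n}_\nu\Big[\sup_{s\le t}\big|\langle M^{n,[u]},M^{n,[w_k]}\rangle_s\big|\Big]\le(1+t)C_\nu\|\widetilde{\bar{h}_n}\|_\infty\big(\mu_{\langle M^{n,[u]}\rangle}(V_n)\big)^{1/2}\big(\mu_{\langle M^{n,[w_k]}\rangle}(V_n)\big)^{1/2}.
$$
Since the energy of $M^{n,[w_k]}$ is controlled by ${\cal E}(w_k,w_k)=\tilde{\cal E}(w_k,w_k)$, which tends to $0$ because $w_k\to0$ in the $\tilde{\cal E}^{1/2}_1$-norm, the right-hand side tends to $0$; thus the covariation term converges to $0$ in $P^{V_n}_\nu$-measure for every such $\nu$, hence along a subsequence uniformly on finite intervals $P^{V_n}_x$-a.s. for q.e. $x$.

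The first term, $N^{n,[uv_k]}\to N^{n,[uv_0]}$, is the main obstacle, and it is where both hypotheses on $v_k$ are used. The semi-Dirichlet analogue of \cite[Lemma 5.1.2]{Fu94} invoked in the proof of Theorem \ref{pb} delivers uniform convergence of the zero-energy parts along a subsequence, but only from strong convergence of the generating functions in the $\tilde{\cal E}^{1/2}_1$-norm; so I must show $uv_k\to uv_0$ strongly, i.e. $uw_k\to0$ in this norm. The difficulty is that Assumption \ref{assum100} by itself yields only $\tilde{\cal E}^{1/2}_1$-boundedness of $\{uw_k\}$ together with $L^2$-convergence, not strong convergence. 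To overcome this I would pass to the comparable symmetric Dirichlet form $(\tilde\eta^{(n)},D({\cal E})_{V_n})$ provided by Assumption \ref{assum1q}: because $D({\cal E})_{V_n,b}$ is an algebra for a genuine Dirichlet form, the product estimate
$$
\tilde\eta^{(n)}(uw_k,uw_k)^{1/2}\le\|u\|_\infty\,\tilde\eta^{(n)}(w_k,w_k)^{1/2}+\|w_k\|_\infty\,\tilde\eta^{(n)}(u,u)^{1/2}
$$
is available. By the comparability $\frac{1}{C_n}\eta^{(n)}_1\le{\cal E}_1\le C_n\eta^{(n)}_1$ (taking symmetric parts, which agree with the full forms on the diagonal), $\tilde\eta^{(n)}_1(w_k,w_k)\to0$ follows from $w_k\to0$ in $\tilde{\cal E}^{1/2}_1$, so $\tilde\eta^{(n)}(w_k,w_k)\to0$, while $\|w_k\|_\infty\to0$ by hypothesis; together with $\|uw_k\|_{L^2}\le\|u\|_\infty\|w_k\|_{L^2}\to0$ this gives $\tilde\eta^{(n)}_1(uw_k,uw_k)\to0$, and hence $uw_k\to0$ in the $\tilde{\cal E}^{1/2}_1$-norm by comparability again. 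Note that $uw_k\in D({\cal E})_{V_n,b}$ by Assumption \ref{assum100}.

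With strong convergence in hand, the cited Fukushima-type lemma yields a subsequence along which $N^{n,[uv_k]}\to N^{n,[uv_0]}$ uniformly on finite intervals, $P^{V_n}_x$-a.s. for q.e. $x$. Extracting one subsequence $\{k'\}$ serving all three terms and passing to the limit in the identity of the first paragraph, the right-hand side converges to $N^{n,[uv_0]}_t-\langle M^{n,[u]},M^{n,[v_0]}\rangle_t-\int_0^t\tilde u(X^{V_n}_s)dN^{n,[v_0]}_s$, which by Proposition \ref{pa} equals $\int_0^t\widetilde{v_0}(X^{V_n}_s)dN^{n,[u]}_s$, giving the desired uniform convergence. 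The crux is precisely the passage from the weak product bound of Assumption \ref{assum100} to genuine strong $\tilde{\cal E}^{1/2}_1$-convergence of $uv_k$, which the comparable symmetric Dirichlet form $\tilde\eta^{(n)}$ and the joint $\|\cdot\|_\infty$- and $\tilde{\cal E}^{1/2}_1$-convergence of $v_k$ make possible.
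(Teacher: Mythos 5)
Your proof is correct and follows essentially the same route as the paper: the paper proves this proposition by adapting \cite[Corollary 3.2]{CMS12}, i.e.\ the integration-by-parts identity of Proposition \ref{pa} combined with Theorem \ref{pb}, the Kunita--Watanabe/energy estimate for the bracket term, and the algebra property of bounded functions carried out in the comparable symmetric Dirichlet form $(\tilde\eta^{(n)},D(\eta^{(n)}))$ from Assumption \ref{assum1q} --- precisely the substitution the paper prescribes at the start of Section 3.1. Your explicit treatment of the passage from Assumption \ref{assum100} to strong $\tilde{\cal E}^{1/2}_1$-convergence of $uv_k$ via $\tilde\eta^{(n)}$ is exactly the ``necessary modification'' the paper leaves implicit.
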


\begin{defin} Let $u\in D({\cal E})_{V_n}$ and $v\in D({\cal E})_{V_n,b}$. We define for $t\ge 0$,
$$
\int_0^t\tilde v(X^{V_n}_{s-})dA^{n,[u]}_s:=\int_0^t\tilde
v(X^{V_n}_{s-})dM^{n,[u]}_s+\int_0^t\tilde
v(X^{V_n}_{s-})dN^{n,[u]}_s.
$$
\end{defin}
Finally, by virtue of \cite[Theorem 3.1]{MSW14} and similar to
\cite[Theorem 3.4]{CMS12}, we can prove the following result.
\begin{thm}\label{thm2} (i)  Let $u,v\in D({\cal E})_{V_n,b}$. Then,
\begin{eqnarray}\label{sd1}
& &\tilde u\tilde v(X^{V_n}_t)-\tilde u\tilde v(X^{V_n}_0)=\int_0^t\tilde v(X^{V_n}_{s-})
d\tilde u(X^{V_n}_s)+\int_0^t\tilde u(X^{V_n}_{s-})d\tilde v(X^{V_n}_s)\nonumber\\
& &\ \ \ \ \ \ +\langle M^{n,[u],c},M^{n,[v],c}\rangle_t\nonumber\\
& &\ \ \ \ \ \ +\sum_{0<s\le t}[\Delta (uv)(X^{V_n}_s)-\tilde
v(X^{V_n}_{s-})\Delta u(X^{V_n}_s)-\tilde u(X^{V_n}_{s-})\Delta
v(X^{V_n}_s)]
\end{eqnarray}
on $[0,\infty)$ $P^{V_n}_{x}{\textrm{-}a.s.}$ for ${\cal
E}{\textrm{-}q.e.}\ x\in V_n$.

(ii) Let $\Phi\in C^2(\mathbb{R}^n)$ and $u_1,\dots,u_n\in D({\cal
E})_{V_n,b}$. Then,
\begin{eqnarray*}
& &\Phi(\tilde u)(X^{V_n}_t)-\Phi(\tilde
u)(X^{V_n}_0)=\sum_{i=1}^n\int_0^t\Phi_i(\tilde
u(X^{V_n}_{s-}))dA^{n,[u_i]}_s\nonumber\\
& &\ \ \ \ \ \ \ \ \ \
+\frac{1}{2}\sum_{i,j=1}^n\int_0^t\Phi_{ij}(\tilde u(X^{V_n}_s))
d\langle M^{n,[u_i],c},M^{n,[u_j],c}\rangle_s\nonumber\\
& &\ \ \ \ \ \ \ \ \ \ +\sum_{0<s\le t}\left[\Delta \Phi(\tilde
u(X^{V_n}_s))-\sum_{i=1}^n\Phi_i(\tilde u(X^{V_n}_{s-}))\Delta
u_i(X^{V_n}_s)\right]
\end{eqnarray*}
on $[0,\infty)$ $P^{V_n}_{x}{\textrm{-}a.s.}$ for ${\cal
E}{\textrm{-}q.e.}\ x\in V_n$, where
$$
\Phi_i(x)=\frac{\partial\Phi}{\partial x_i}(x),\ \ \Phi_{ij}(x)=\frac{\partial^2\Phi}{\partial x_i\partial x_j}(x),\ \ i,j=1,\dots,n,
$$
and $u=(u_1,\dots,u_n)$.
\end{thm}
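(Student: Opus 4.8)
The plan is to read (i) as an integration-by-parts formula and then to bootstrap (ii) from (i) by a polynomial approximation, following \cite[Theorem 3.4]{CMS12}. For (i), since $u,v\in D({\cal E})_{V_n,b}$, Assumption \ref{assum1q} (which implies Assumption \ref{assum100}) gives $uv\in D({\cal E})_{V_n,b}$, so Lemma \ref{thm3.6} yields the Fukushima decomposition $\tilde u\tilde v(X^{V_n}_t)-\tilde u\tilde v(X^{V_n}_0)=M^{n,[uv]}_t+N^{n,[uv]}_t$. On the right-hand side of (\ref{sd1}) I would unfold the definition of the stochastic integral, writing $\int_0^t\tilde v(X^{V_n}_{s-})dA^{n,[u]}_s=\int_0^t\tilde v(X^{V_n}_{s-})dM^{n,[u]}_s+\int_0^t\tilde v(X^{V_n}_{s-})dN^{n,[u]}_s$ and symmetrically for $v$, and then apply Proposition \ref{pa} to combine the two Nakao integrals into $N^{n,[uv]}_t-\langle M^{n,[u]},M^{n,[v]}\rangle_t$.

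The remaining bookkeeping is purely algebraic. Using $\Delta(uv)(X^{V_n}_s)=\tilde v(X^{V_n}_{s-})\Delta u(X^{V_n}_s)+\tilde u(X^{V_n}_{s-})\Delta v(X^{V_n}_s)+\Delta u(X^{V_n}_s)\Delta v(X^{V_n}_s)$, the jump sum in (\ref{sd1}) collapses to $\sum_{0<s\le t}\Delta u(X^{V_n}_s)\Delta v(X^{V_n}_s)=[M^{n,[u],d},M^{n,[v],d}]_t$ (the $N$-parts being continuous, the MAFs carry all the jumps), and since $\langle M^{n,[u],c},M^{n,[v],c}\rangle_t=[M^{n,[u],c},M^{n,[v],c}]_t$, the continuous and jump correction terms add up to $[M^{n,[u]},M^{n,[v]}]_t$. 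Collecting everything and cancelling $N^{n,[uv]}_t$, the identity (\ref{sd1}) becomes equivalent to the martingale product rule
\be
M^{n,[uv]}_t&=&\int_0^t\tilde v(X^{V_n}_{s-})dM^{n,[u]}_s+\int_0^t\tilde u(X^{V_n}_{s-})dM^{n,[v]}_s\\
&&+[M^{n,[u]},M^{n,[v]}]_t-\langle M^{n,[u]},M^{n,[v]}\rangle_t,
\ee
whose last two terms form a purely discontinuous martingale; this is exactly \cite[Theorem 3.1]{MSW14} transported to the part process $X^{V_n}$, so (i) follows.

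For (ii), I would first establish the formula for polynomials $\Phi$. By linearity it suffices to treat monomials, which I handle by induction on the degree: the degree-one case is immediate, and the inductive step $\Phi=\Psi\cdot x_j$ follows by applying (i) to the pair $\Psi(\tilde u)$ (which lies in $D({\cal E})_{V_n,b}$ by iterating Assumption \ref{assum1q}) and $u_j$, combined with the inductive hypothesis for $\Psi$. For general $\Phi\in C^2(\mathbb{R}^n)$, I would choose polynomials $\Phi^{(m)}$ with $\Phi^{(m)}$, $\Phi^{(m)}_i$ and $\Phi^{(m)}_{ij}$ converging to $\Phi$, $\Phi_i$ and $\Phi_{ij}$ uniformly on the compact box $K=\prod_i[-\|u_i\|_\infty,\|u_i\|_\infty]$ containing the range of $\tilde u$, and pass to the limit term by term: the first-order integrals $\int_0^t\Phi^{(m)}_i(\tilde u(X^{V_n}_{s-}))dA^{n,[u_i]}_s$ converge by the continuity results Theorem \ref{pb} and Proposition \ref{cor1}, and the second-order terms converge by dominated convergence against the finite-variation measures $d\langle M^{n,[u_i],c},M^{n,[u_j],c}\rangle$.

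The main obstacle is the convergence of the jump sum in (ii). One must dominate the summand uniformly via a second-order Taylor estimate, $|\Delta\Phi^{(m)}(\tilde u(X^{V_n}_s))-\sum_i\Phi^{(m)}_i(\tilde u(X^{V_n}_{s-}))\Delta u_i(X^{V_n}_s)|\le\frac12\,(\sup_m\|\Phi^{(m)}_{ij}\|_{\infty,K})\sum_{i,j}|\Delta u_i(X^{V_n}_s)\,\Delta u_j(X^{V_n}_s)|$, and then invoke the $P^{V_n}_x$-a.s. finiteness of $\sum_{0<s\le t}\sum_{i,j}|\Delta u_i(X^{V_n}_s)\,\Delta u_j(X^{V_n}_s)|$, which follows from each $M^{n,[u_i]}$ being a locally square integrable MAF (so $\sum_{s\le t}(\Delta u_i)^2=[M^{n,[u_i],d}]_t<\infty$) together with Cauchy--Schwarz, in the spirit of the estimate in Lemma \ref{lem2.3}; this legitimizes the dominated convergence. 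A secondary point to verify carefully is that \cite[Theorem 3.1]{MSW14}, which underlies (i), is available for the part form $({\cal E}^{V_n},D({\cal E})_{V_n})$; for this I would use that $X^{V_n}$ is itself the Markov process of a quasi-regular semi-Dirichlet form, so the Section \ref{Sec: Decomposition} results apply verbatim on $V_n$.
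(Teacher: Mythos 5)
Your proposal is correct and follows essentially the same route as the paper, which proves Theorem \ref{thm2} precisely by combining the martingale product rule of \cite[Theorem 3.1]{MSW14} (applied to the part process, entering through Proposition \ref{pa} exactly as in your reduction of (\ref{sd1})) with the polynomial-approximation argument of \cite[Theorem 3.4]{CMS12}, using Theorem \ref{pb} and Proposition \ref{cor1} for the limit passage. Your explicit treatment of the jump-sum convergence via the uniform second-order Taylor bound and the a.s.\ finiteness of $\sum_{0<s\le t}(\Delta u_i(X^{V_n}_s))^2$ fills in details the paper leaves to the cited references, but it is the same argument.
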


\subsection {Stochastic Integral for $X$}
 In this subsection, for $u,v\in D(\mathcal{E})_{loc}$, we will define the
stochastic integral $\int_0^t\tilde v(X_{s-})dA_s^{[u]}$. To this
end, we first choose a $\{{V_n}\}\in \Theta$ such that Assumption
\ref{assum1q} is satisfied and $\widetilde{\hat{h}}$ is bounded on
each ${V_n}$. Then, we choose $\{E_{n}\}\in\Theta$ and
$\{u_n,v_n\}$ such that $E_n\subset V_n$, $u_n,v_n\in
D(\mathcal{E})_{V_n,b}$, $u=u_n$ and $v=v_n$ on $E_n$ for each
$n\in \mathbb{N}$. The existence of $\{E_{n}\}$ and $\{u_n,v_n\}$
is justified by the argument before Lemma \ref{lem2.3}. Now we
define $\int_0^t\tilde v(X_{s-})dA_s^{[u]}$ on $I(\zeta)$ by
\begin{equation}\label{equ11}
\int_0^t\tilde v(X_{s-})dA_s^{[u]}:=\lim_{n\rightarrow\infty}
\int_0^t{\widetilde v_n}(X_{s-})dA_s^{n,[u_n]},\end{equation}
where the stochastic integral $\int_0^t{\widetilde
v_n}(X_{s-})dA_s^{n,[u_n]}$ is defined as in the above subsection.

\begin{thm} For $u,v\in D(\mathcal{E})_{loc}$, the stochastic integral in (\ref{equ11}) is
well-defined. Moreover, if $u,u',v,v'\in D(\mathcal{E})_{loc}$
satisfying $u=u'$ and $v=v'$ on $U$ for some finely open set $U$,
then
\begin{equation}\label{exten}
\int_0^t\tilde v(X_{s-})dA_s^{[u]}=\int_0^t{\widetilde
v'}(X_{s-})dA_s^{[u']},
\end{equation}
for $0\le t<\tau_{U}$ if $\tau_{U}<\zeta$; and for $0\le
t\le\zeta$ if $\tau_{U}=\zeta$, $P_{x}{\textrm{-}a.s.}$ for ${\cal
E}{\textrm{-}q.e.}\ x\in E$.
\end{thm}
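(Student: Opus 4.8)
The plan is to reduce both assertions of the theorem to a single consistency statement for the part-process integrals and then pass to the limit. Fix $n\le m$; after intersecting the two approximating families we may assume $E_n\subset E_m\subset V_n\subset V_m$ with $u_n=u_m=u$ and $v_n=v_m=v$ on $E_n$. Let $\Upsilon$ be a finely open set with $\Upsilon\subset E_n$. On $[\![0,\tau_\Upsilon[\![$ we have $X^{V_n}=X^{V_m}=X$, so the two integrators coincide, $A^{n,[u_n]}_t=A^{m,[u_m]}_t=\tilde u(X_t)-\tilde u(X_0)$, and the two integrands coincide, $\widetilde{v_n}(X_{s-})=\widetilde{v_m}(X_{s-})=\tilde v(X_{s-})$, because $X_{s-}\in E_n$ for $s<\tau_\Upsilon$. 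The core lemma I would prove is that, consequently, $\int_0^t\widetilde{v_n}(X_{s-})dA^{n,[u_n]}_s=\int_0^t\widetilde{v_m}(X_{s-})dA^{m,[u_m]}_s$ for $0\le t<\tau_\Upsilon$, $P_x$-a.s. for ${\cal E}$-q.e. $x$.

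To prove this lemma I would split each side, through Definition \ref{00s}, into its martingale part $\int_0^t\tilde v(X_{s-})dM^{n,[u_n]}_s$ and its Nakao part $\int_0^t\tilde v(X_s)dN^{n,[u_n]}_s$, and analyse the discrepancy between the two Fukushima decompositions on $[\![0,\tau_\Upsilon[\![$. The continuous martingale parts agree there by (the analogue of) \cite[Lemma 1.14]{MSW14} already used in the proof of Theorem \ref{thm3.2}, while the purely discontinuous martingale parts have identical jumps on $[\![0,\tau_\Upsilon[\![$, namely the jumps of $\tilde u(X)$ whose endpoints both lie in $E_n$. Hence $R:=M^{n,[u_n]}-M^{m,[u_m]}$ is, on $[\![0,\tau_\Upsilon[\![$, a continuous process of finite variation: it is exactly the difference of the two predictable L\'{e}vy-system compensators, which differ only through the exit jumps that $X^{V_n}$ sends to the cemetery while $X^{V_m}$ keeps as finite jumps. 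Since $A^{n,[u_n]}=A^{m,[u_m]}$ on $[\![0,\tau_\Upsilon[\![$, the same finite-variation functional equals $N^{m,[u_m]}-N^{n,[u_n]}=-R$.

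The decisive step is a cancellation between these two discrepancies. The difference of the martingale integrals equals $\int_0^t\tilde v(X_{s-})\,dR_s$, an ordinary Stieltjes integral because $R$ is continuous and of finite variation on $[\![0,\tau_\Upsilon[\![$. For the Nakao parts I would characterise each integral by Lemma \ref{lem2} and Remark \ref{rem1}, so that on $[\![0,\tau_\Upsilon[\![$ the integral $\int_0^t\tilde v(X_s)dN^{n,[u_n]}_s$ is the unique element of $\mathcal N^{n,*}_c$ with short-time functional $h\mapsto-\mathcal E^{V_n}(u_n,hv_n)$ for $h\in D(\mathcal E)_{\Upsilon,b}$, and similarly for $m$; the difference of these two functionals is a pure L\'{e}vy-system term coming from the exit jumps, which I would identify, via Proposition \ref{pro2.7} applied to the finite-variation functional $N^{m,[u_m]}-N^{n,[u_n]}=-R$, with the short-time functional of $\int_0^t\tilde v(X_s)d(-R)_s$. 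Adding the martingale and Nakao contributions then gives $\int\tilde v\,dR+\int\tilde v\,d(-R)=0$ on $[\![0,\tau_\Upsilon[\![$, which proves the lemma. I expect this cancellation to be the main obstacle: it is precisely here that the non-locality of $(\mathcal E,D(\mathcal E))$ — which makes the individual martingale and zero-energy parts fail to be consistent across the $V_n$ — must be absorbed, and where the compensator of the jumps leaving $V_n$ has to be matched against the corresponding change in $\mathcal E^{V_n}(u_n,hv_n)$.

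Finally I would deduce the theorem. Taking two approximating data sets and a common refinement, the lemma shows that the right-hand side of (\ref{equ11}) is, on each $[\![0,\tau_{E_k}[\![$, independent of $n$ for $n\ge k$ and independent of the chosen data; since $I(\zeta)=\cup_k[\![0,\tau_{E_k}]\!]$, the limit exists and defines a local AF, which establishes well-definedness. For the locality property (\ref{exten}), given $u=u'$ and $v=v'$ on the finely open set $U$, I would choose the two approximating families so that they agree with $u,v$ and with $u',v'$ respectively on $U\cap E_n$, and apply the lemma with $\Upsilon\subset U$, obtaining equality for $0\le t<\tau_U$; in the case $\tau_U=\zeta$ the only exit from $U$ is to the cemetery, where both integrands and integrators agree, so the identity extends to $0\le t\le\zeta$.
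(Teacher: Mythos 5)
Your global skeleton is the same as the paper's: reduce everything to a consistency identity for the part-process integrals across two indices $k<l$ on a common finely open set, then glue with \cite[Proposition 2.4]{MSW14}; that reduction, and your treatment of well-definedness and of (\ref{exten}), are in order (apart from the slip that your normalization $E_n\subset E_m\subset V_n\subset V_m$ is impossible in general -- only $E_n\subset V_n\subset V_m$ can be arranged). The genuine gap is in the proof of your core lemma, at exactly the point you yourself flag as ``the main obstacle'': the cancellation between the martingale discrepancy and the Nakao discrepancy is asserted, not proved, and the tools you cite cannot deliver it. Lemma \ref{lem2} and Proposition \ref{pro2.7} operate \emph{within} the single class ${\cal N}^{n,*}_c$ of AFs of one fixed part process $X^{V_n}$, whereas you must compare $\int_0^t\tilde v(X_s)dN^{n,[u_n]}_s$ -- built from $\lambda(u_n,v_n)^{\triangle}$, $G(u_n,v_n)$ and $u_n^*$, all defined through the symmetrization $\tilde\eta^{(n)}$ globally on $V_n$ and in no sense local -- with $\int_0^t\tilde v(X_s)dN^{m,[u_m]}_s$, built through $\tilde\eta^{(m)}$ on $V_m$. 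The short-time characterizations are under $E^{V_n}_{h\cdot m}$ and $E^{V_m}_{h\cdot m}$, i.e.\ for two different processes, so you would need a cross-process uniqueness lemma together with the explicit identity matching ${\cal E}^{V_m}(u_m,hv_m)-{\cal E}^{V_n}(u_n,hv_n)$, $h\in D({\cal E})_{\Upsilon,b}$, against the Revuz functional of $\int\tilde v\,dR$; neither is established. Moreover the finite variation of $R$ on $[\![0,\tau_{\Upsilon}[\![$ is itself unproved: the differing jumps (from $\Upsilon$ to $V_m\setminus E_n$, to $V_n\setminus E_n$, and to $\Delta$) need not have an integrable compensator unless one first secures a Lemma \ref{lem2.3}-type $J$-integrability estimate with a quasi-closure containment of $\Upsilon$ in $E_n$; and since $R$ stopped at $\tau_{\Upsilon}$ is also a local martingale, the bookkeeping of the endpoint jump at $\tau_{\Upsilon}$ is delicate -- done carelessly it would force $R\equiv 0$, which is false for non-local forms.

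The paper circumvents the compensator matching altogether by a three-step substitution in which each step stays where the existing tools genuinely apply: (\ref{gh2}) changes the integrand $g_k\to g_l$ within the fixed part process $X^{V_l}$, by approximating $f_l$ from $D(L^{V_l})$ and invoking Proposition \ref{pro2.7} and Theorem \ref{pb}; (\ref{gh3}) changes the integrator function $f_k\to f_l$ within $X^{V_l}$ via the It\^o/product formula (\ref{sd1}), exploiting that $f_k-f_l$ vanishes on $F_k\cap F_l$; and (\ref{gh4}) changes the part process $k\to l$ while keeping the \emph{same} function $f_k$, using that $f_k=0$ ${\cal E}$-q.e.\ off $V_k$, so that with the convention $f_k(\Delta)=0$ the jump compensators of $f_k(X^{V_k})$ and $f_k(X^{V_l})$ literally coincide before $\tau_{F_k}$ and the two Fukushima decompositions agree there (the \cite[Lemma 1.14]{MSW14} argument) -- your mismatch term $R$ never arises. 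To repair your route you would have to prove the cross-process lemma and the compensator identity; the simpler fix is to insert the paper's intermediate quantity $\int_0^t\widetilde{g_k}(X_{s-})dA^{l,[f_k]}_s$ and argue in the three stages above.
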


\begin{proof} First, we fix a $\{{V_n}\}\in \Theta$ such that Assumption
\ref{assum1q} is satisfied and $\widetilde{\hat{h}}$ is bounded on
each ${V_n}$. Suppose that there are two finely open sets $F_k$,
$F_l$ satisfying $F_k\subset V_k$, $F_l\subset V_l$, $k<l$;
$f_k,g_k\in D(\mathcal{E})_{V_k,b}$, $u=f_k$, $v=g_k$ on $F_k$;
$f_l,g_l\in D(\mathcal{E})_{V_l,b}$, $u=f_l$, $v=g_l$ on $F_l$.
Below we will show that
\begin{equation}\label{gh1}
\int_0^t{\widetilde g_k}(X_{s-})dA_s^{k,[f_k]}=\int_0^t{\widetilde
g_l}(X_{s-})dA_s^{l,[f_l]},
\end{equation}
for $0\le t<\tau_{F_k\cap F_l}$ if $\tau_{F_k\cap F_l}<\zeta$; and
for $0\le t\le\zeta$ if $\tau_{F_k\cap F_l}=\zeta$,
$P_{x}{\textrm{-}{\rm a.s.}}$ for ${\cal E}{\textrm{-}{\rm q.e.}}\
x\in V_k$.

In fact, by approximating $f_l$ by a sequence of functions in
$D(L^{V_l})$, Proposition \ref{pro2.7} and Theorem \ref{pb}, we
find that
\begin{equation}\label{gh2}
\int_0^t{\widetilde g_k}(X_{s-})dA_s^{l,[f_l]}=\int_0^t{\widetilde
g_l}(X_{s-})dA_s^{l,[f_l]},\ \ 0\le t\le \tau_{F_k\cap F_l},
\end{equation}
$P^{V_l}_{x}{\textrm{-}{\rm a.s.}}$ for ${\cal E}{\textrm{-}{\rm
q.e.}}\ x\in V_l$. Since $A_{t\wedge\tau_{V_l}}^{l,[f_l]}\in{\cal
F}^{V_l}_{t\wedge\tau_{V_l}-}$, (\ref{gh1}) holds
$P_{x}{\textrm{-}{\rm a.s.}}$ for ${\cal E}{\textrm{-}{\rm q.e.}}\
x\in V_l$.

Further, we obtain by (\ref{sd1}) that
\begin{equation}\label{gh3}
\int_0^t{\widetilde g_k}(X_{s-})dA_s^{l,[f_k]}=\int_0^t{\widetilde
g_k}(X_{s-})dA_s^{l,[f_l]},
\end{equation}
for $0\le t<\tau_{F_k\cap F_l}$ if $\tau_{F_k\cap F_l}<\zeta$; and
for $0\le t\le\zeta$ if $\tau_{F_k\cap F_l}=\zeta$,
$P^{V_l}_{x}{\textrm{-}{\rm a.s.}}$ and hence
$P_{x}{\textrm{-}{\rm a.s.}}$ for ${\cal E}{\textrm{-}{\rm q.e.}}\
x\in V_l$. Note that
$M^{k,[f_k]}_{t\wedge\tau_{F_k}}=M^{l,[f_k]}_{t\wedge\tau_{F_k}}$
and
$N^{k,[f_k]}_{t\wedge\tau_{F_k}}=N^{l,[f_k]}_{t\wedge\tau_{F_k}}$
$P^{V_k}_{x}{\textrm{-}{\rm a.s.}}$ for ${\cal E}{\textrm{-}{\rm
q.e.}}\ x\in V_k$ (cf. the proof of \cite[Lemma 1.14]{MSW14}). By
approximating $f_k$ by a sequence of functions in $D(L^{V_k})$,
Proposition \ref{pro2.7} and Theorem \ref{pb}, we get
\begin{equation}\label{gh4}
\int_0^t{\widetilde g_k}(X_{s-})dA_s^{k,[f_k]}=\int_0^t{\widetilde
g_k}(X_{s-})dA_s^{l,[f_k]},\ \ 0\le t\le \tau_{F_k},
\end{equation}
$P^{V_k}_{x}{\textrm{-}{\rm a.s.}}$ and hence
$P_{x}{\textrm{-}{\rm a.s.}}$ for ${\cal E}{\textrm{-}{\rm q.e.}}\
x\in V_k$. Therefore, (\ref{gh1}) holds for $0\le t<\tau_{F_k\cap
F_l}$ if $\tau_{F_k\cap F_l}<\zeta$; and for $0\le t\le\zeta$ if
$\tau_{F_k\cap F_l}=\zeta$, $P_{x}{\textrm{-}{\rm a.s.}}$ for
${\cal E}{\textrm{-}{\rm q.e.}}\ x\in V_k$  by
(\ref{gh2})-(\ref{gh4}).

Now we suppose that (\ref{equ11}) is defined by a different
$\{V_n\}\in \Theta$, say $\{V'_n\}\in \Theta$. By considering
$\{V_n\cap V'_n\}$,  \cite[Proposition 2.4]{MSW14} and the above
argument, we find that the two limits in (\ref{equ11}) are equal
on $I(\zeta)$, $P_{x}{\textrm{-}{\rm a.s.}}$ for ${\cal
E}{\textrm{-}{\rm q.e.}}\ x\in E$. Therefore, (\ref{equ11}) is
well-defined.

From (\ref{gh1}) and its proof, we find that if $u,u',v,v'\in
D(\mathcal{E})_{loc}$ satisfying $u=u'$ and $v=v'$ on $U$ for some
finely open set $U$, then there exists an $\{E_n\}\in\Theta$ such
that (\ref{exten}) holds on $\cup_n [\![0,\tau_{E_n\cap U}]\!]$,
$P_{x}{\textrm{-}{\rm a.s.}}$ for ${\cal E}{\textrm{-}{\rm q.e.}}\
x\in E$. By \cite[Proposition 2.4]{MSW14}, this implies that
(\ref{exten}) holds for $0\le t<\tau_{U}$ if $\tau_{U}<\zeta$; and
for $0\le t\le\zeta$ if $\tau_{U}=\zeta$, $P_{x}{\textrm{-}{\rm
a.s.}}$ for ${\cal E}{\textrm{-}{\rm q.e.}}\ x\in E$. The proof is
complete.
\end{proof}

From the proof of Theorem \ref{thm3.2}, we find that $M^{[u],c}$
is well defined whenever $u\in D({\cal E})_{loc}$. Therefore, we
obtain by Theorem \ref{thm2} and (\ref{exten}) the following
theorem.
\begin{thm} Let $\Phi\in C^2(\mathbb{R}^n)$ and $u_1,\dots,u_n\in D({\cal
E})_{loc}$. Then,
\begin{eqnarray}\label{Ito1}
& &A_t^{[\Phi(u)]}=\sum_{i=1}^n\int_0^t\Phi_i(\tilde
u(X_{s-}))dA^{[u_i]}_s+\frac{1}{2}\sum_{i,j=1}^n\int_0^t\Phi_{ij}(\tilde
u(X_s))
d\langle M^{[u_i],c},M^{[u_j],c}\rangle_s\nonumber\\
& &\ \ \ \ +\sum_{0<s\le t}\left[\Delta \Phi(\tilde
u(X_s))-\sum_{i=1}^n\Phi_i(\tilde u(X_{s-}))\Delta u_i(X_s)\right]
\end{eqnarray}
on $I(\zeta)$ $P_{x}{\textrm{-}a.s.}$ for ${\cal
E}{\textrm{-}q.e.}\ x\in E$, where
$$
\Phi_i(x)=\frac{\partial\Phi}{\partial x_i}(x),\ \
\Phi_{ij}(x)=\frac{\partial^2\Phi}{\partial x_i\partial x_j}(x),\
\ i,j=1,\dots,n,
$$
and $u=(u_1,\dots,u_n)$.
\end{thm}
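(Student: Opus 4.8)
The plan is to deduce the global formula (\ref{Ito1}) from the part-process It\^o formula of Theorem \ref{thm2}(ii) by localization, and then to glue the localized identities along the exhaustion $I(\zeta)=\cup_k[\![0,\tau_{E_k}]\!]$ furnished by \cite[Proposition 2.4]{MSW14}. The point is that both the stochastic integral defined in (\ref{equ11}) and the continuous martingale part $M^{[u],c}$ are built precisely as the consistent limits of their $X^{V_k}$-counterparts, so the task reduces to checking that each term on the right-hand side of (\ref{Ito1}) agrees, up to $\tau_{E_k}$, with the matching term of the formula for the part process on $V_k$.

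First I would fix $\{V_k\}\in\Theta$ satisfying Assumption \ref{assum1q} with $\widetilde{\hat h}$ bounded on each $V_k$, and choose $\{E_k\}\in\Theta$ together with bounded representatives $u_{i,k}\in D(\mathcal{E})_{V_k,b}$, $i=1,\dots,n$, such that $E_k\subset V_k$ and $u_i=u_{i,k}$ on $E_k$; the existence of such data is the construction used before Lemma \ref{lem2.3}, applied simultaneously to the finitely many functions $u_1,\dots,u_n$ by intersecting the associated nests. Since $\Phi\in C^2(\mathbb{R}^n)$ and each $u_{i,k}$ is bounded, Assumption \ref{assum1q} gives $\Phi_i(u_k),\Phi(u_k)\in D(\mathcal{E})_{V_k,b}$, so that setting $\Phi_i(u):=\Phi_i(u_k)$ on $E_k$ places $\Phi_i(u)\in D(\mathcal{E})_{loc}$ and makes the integrals $\int_0^t\Phi_i(\tilde u(X_{s-}))\,dA_s^{[u_i]}$ in (\ref{Ito1}) well-defined through (\ref{equ11}). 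I would also record from the proof of Theorem \ref{thm3.2} that $M^{[u_i],c}$ exists and satisfies $M^{[u_i],c}_{t\wedge\tau_{E_k}}=M^{k,[u_{i,k}],c}_{t\wedge\tau_{E_k}}$, whence $\langle M^{[u_i],c},M^{[u_j],c}\rangle_{t\wedge\tau_{E_k}}=\langle M^{k,[u_{i,k}],c},M^{k,[u_{j,k}],c}\rangle_{t\wedge\tau_{E_k}}$.

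Next I would apply Theorem \ref{thm2}(ii) to $\Phi$ and $u_{1,k},\dots,u_{n,k}$ on $X^{V_k}$, and transfer the resulting identity from $X^{V_k}$ to $X$ on $[\![0,\tau_{E_k}]\!]$ exactly as the $P^{V_l}_x$-to-$P_x$ passage is carried out in the proof of the preceding theorem, using that the stopped process and all functionals are measurable with respect to the data before $\tau_{V_k}$ and that $X^{V_k}=X$ up to $\tau_{V_k}\ge\tau_{E_k}$. On $E_k$ one has $u_i=u_{i,k}$ and $\Phi(u)=\Phi(u_k)$, so $A_t^{[\Phi(u)]}$ equals the left-hand side of the $X^{V_k}$-formula for $t\le\tau_{E_k}$; the first sum matches by the extension property (\ref{exten}) (applied with $u_i=u_{i,k}$ and $\Phi_i(u)=\Phi_i(u_k)$ on $E_k$), the bracket sum matches by the identity recorded above, and the jump sum matches termwise because $X_s=X^{V_k}_s$ and $u_i(X_s)=u_{i,k}(X^{V_k}_s)$ for $s\le\tau_{E_k}$. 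Thus (\ref{Ito1}) holds on $[\![0,\tau_{E_k}]\!]$ for every $k$, $P_x$-a.s. for $\mathcal{E}$-q.e. $x\in E$; choosing $\{E_k\}$ so that $I(\zeta)=\cup_k[\![0,\tau_{E_k}]\!]$ and combining over $k$ gives (\ref{Ito1}) on all of $I(\zeta)$.

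The main obstacle is not a single estimate but the consistency bookkeeping: one must ensure that the three pieces --- the Nakao integral part of $\int\Phi_i(\tilde u(X_{s-}))\,dA^{[u_i]}$, the continuous-martingale bracket $\langle M^{[u_i],c},M^{[u_j],c}\rangle$, and the jump functional --- are localized by one and the same nest $\{E_k\}$ and are each independent of the representatives $u_{i,k}$ chosen on $E_k$, so that (\ref{equ11}) together with the well-definedness statement (\ref{exten}) can be invoked uniformly in $i,j$. The one genuinely new subtlety is that the integrand $\Phi_i(\tilde u(X_{s-}))$ is not a priori of the form $\tilde v(X_{s-})$ for a globally prescribed $v\in D(\mathcal{E})_{loc}$; this is exactly what the observation $\Phi_i(u)\in D(\mathcal{E})_{loc}$ resolves, after which (\ref{equ11}) and (\ref{exten}) apply verbatim.
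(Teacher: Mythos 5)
Your proposal is correct and follows essentially the same route as the paper, which proves the theorem in one stroke by combining the part-process It\^o formula of Theorem \ref{thm2}(ii), the consistency property (\ref{exten}) of the stochastic integral (\ref{equ11}), and the well-definedness of $M^{[u],c}$ for $u\in D(\mathcal{E})_{loc}$ established in the proof of Theorem \ref{thm3.2}, then glues along the exhaustion $I(\zeta)=\cup_k[\![0,\tau_{E_k}]\!]$. Your added bookkeeping --- choosing a single nest $\{E_k\}$ for all the $u_i$ simultaneously and observing that $\Phi_i(u)\in D(\mathcal{E})_{loc}$ so that (\ref{equ11}) and (\ref{exten}) apply to the integrands --- is exactly the detail the paper leaves implicit.
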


\section {Some Examples\setcounter{equation}{0}}
In this section, we give some examples for which  all results of
the previous two sections can be applied.

First, we consider a local semi-Dirichlet form.

\begin{exa}\label{e2} (see \cite{smuland}) Let $d\geq3$, $U$ be an open subset of $\mathbb{R}^d$, $\sigma,\rho\in L^{1}_{loc}(U;dx)$, $\sigma,\rho>0$ $dx{\textrm{-}}a.e.$
For $u,v\in C^{\infty}_0(U)$, we define
\begin{eqnarray*}
\mathcal{E}_{\rho}(u,v)=\sum_{i,j=1}^d\int_U\frac{\partial
u}{\partial x_i}
 \frac{\partial v}{\partial x_j}\rho dx.
 \end{eqnarray*}
 Assume that
  $$(\mathcal{E}_{\rho},C^{\infty}_0(U)) \ \mbox{is closable on}\ L^2(U;\sigma dx).$$

Let $a_{ij},b_i,d_i\in L^1_{loc}(U;dx)$, $1\leq i,j\leq d$. For
$u,v\in C^{\infty}_{0}(U)$, we define
\begin{eqnarray*}
\mathcal{E}(u,v) &=&\sum_{i,j=1}^{d}\int_U\frac{\partial u}{\partial
     x_{i}}\frac{\partial u}{\partial
     x_{j}}a_{ij}dx+\sum_{i=1}^{d}\int_U
     \frac{\partial u}{\partial x_{i}}vb_{i}dx\\
     & &+\sum_{i=1}^{d}\int_U
     u\frac{\partial v}{\partial x_{i}}d_{i}dx
   +\int_U uvcdx.
\end{eqnarray*}
Set $\tilde{a}_{ij}:=\frac{1}{2}(a_{ij}+a_{ji})$,
$\check{a}_{ij}:=\frac{1}{2}(a_{ij}-a_{ji})$,
$\underline{b}:=(b_1,\dots, b_d)$, and $\underline{d}:=(d_1,\dots,
d_d)$. Define F to be the set of all functions $g\in
L^1_{loc}(U;dx)$ such that the distributional derivatives
$\frac{\partial g}{\partial x_i},\ 1\leq i\leq d$, are in
$L^1_{loc}(U;dx)$ such that $\|\nabla g\|(g\sigma)^{-\frac{1}{2}}\in
L^{\infty}(U;dx)$ or $\|\nabla
g\|^p(g^{p+1}\sigma^{p/q})^{-\frac{1}{2}}\in L^d(U;dx)$ for some
$p,q\in (1,\infty)$ with $\frac{1}{p}+\frac{1}{q}=1,\ p<\infty$,
where $\|\cdot\|$ denotes Euclidean distance in $\mathbb{R}^d$. We
say that a $\mathcal{B}(U)-$measurable function f has property
$(A_{\rho,\sigma})$ if one of the following conditions holds:

(i) $f(\rho\sigma)^{-\frac{1}{2}}\in L^{\infty}(U;dx)$.

(ii) $f^p(\rho^{p+1}\sigma^{p/q})^{-\frac{1}{2}}\in L^d(U,dx)$ for
some
 $p,q\in (1,\infty)$ with ${1\over p}+{1\over q}=1,\ p<\infty,$ and $\rho\in F$.

Suppose that

(A.I) There exists $\eta>0$ such that $ \sum_{i,j=1}^d
\tilde{a}_{ij}\xi_i\xi_j\ge\eta|\underline{\xi}|^2$, $\forall
\underline{\xi}=(\xi_1,\dots,\xi_d)\in\mathbb{R}^d$.

(A.II) $\check{a}_{ij}\rho^{-1}\in L^{\infty}(U;dx)$ for $1\le
i,j\le d$.

(A.III) For all $K\subset U$, $K$ compact,
$1_K\|\underline{b}+\underline{d}\|$
 and $1_Kc^{1/2}$ have property $(A_{\rho,\sigma}),$ and
 $(c+\alpha_0\sigma) dx-\sum_{i=1}^d\frac{\partial d_i}{\partial x_i}$
 is a positive measure on $\mathcal{B}(U)$ for some $\alpha_0\in (0,\infty)$.

(A.IV) $||\underline{b}-\underline{d}||$ has property
$(A_{\rho,\sigma})$.

(A.V) $\underline{b}=\underline{\beta}+\underline{\gamma}$ such that
$\|\underline\beta\|,
 \|\underline\gamma\|\in L^{1}_{loc}(U,dx)$,
 $(\alpha_0\sigma+c)dx-\sum_{1}^{d}{\partial \gamma_{i}\over \partial x_{i}}$
 is a positive measure on $\mathcal{B}(U)$ and
 $\|\underline{\beta}\|$
 has property $(A_{\rho,\sigma})$.

\noindent Then, by \cite[Theorem 1.2]{smuland}, there exists
$\alpha>0$ such that $(\mathcal{E}_{\alpha}, C^{\infty}_{0}(U))$ is
closable on $L^{2}(U;dx)$ and its closure
$(\mathcal{E}_{\alpha},D(\mathcal{E}_{\alpha}))$ is a regular local
semi-Dirichlet form on $L^{2}(U;dx)$. Define
$\eta_{\alpha}(u,u):=\mathcal{E}_{\alpha}(u,u)-\int\langle\triangledown
u,\underline{\beta}\rangle udx$
 for $u\in D(\mathcal{E}_{\alpha})$. By \cite[Theorem 1.2 (ii) and (1.28)]{smuland}, we know
$(\eta_{\alpha},D(\mathcal{E})_{\alpha})$ is a Dirichlet form and
there exists $C>1$ such that for any $u\in D(\mathcal{E}_{\alpha})$,
\begin{eqnarray*}
\frac{1}{C}\eta_{\alpha}(u,u)\leq \mathcal{E}_{\alpha}(u,u)\leq
C\eta_{\alpha}(u,u).
\end{eqnarray*}
 Let $X$ be the diffusion process associated with $(\mathcal{E}_{\alpha},D(\mathcal{E}_{\alpha}))$. For $u\in D(\mathcal{E}_{\alpha})_{loc}$,
 we have the decomposition (\ref{Vs2}) and It\^{o}'s formula (\ref{Ito1}).
 \end{exa}

Next we consider a  semi-Dirichlet form of pure jump type.

 \begin{exa}(See \cite{FU12} and cf. also \cite{SW}) Let $(E,d)$ be a locally compact separable metric space, $m$ be a positive Radon Measure on $E$
  with full topological support, and $k(x,y)$ be a nonnegative Borel measurable function on $\{(x,y)\in E\times E|\ x\neq y\}$.
  Set
$k_s(x,y)=\frac{1}{2}(k(x,y)+k(y,x))$ and $k_a=\frac{1}{2}(k(x,y)-k(y,x))$.
 Denote by $C^{lip}_0(E)$ the family of all uniformly Lipschitz continuous functions on $E$ with compact support. Suppose that the
 following conditions hold:\\
 (B.I)\ $x\mapsto\int_{y\neq x}(1\wedge d(x,y)^2)k_s(x,y)m(dy)\in L^1_{loc}(E;dx).$\\
(B.II)\ $sup_{x\in E}\int_{\{y:\
k_s(x,y)\neq0\}}\frac{k^2_a(x,y)}{k_s(x,y)}m(dy)<\infty.$

 Define
for $u,v\in C^{lip}_0(E)$,
\begin{eqnarray*}
\eta(u,v)&=&\int\hskip-0.2cm\int_{x\neq
y}(u(x)-u(y))(v(x)-v(y))k_s(x,y)m(dx)m(dy),
\end{eqnarray*}
and
\begin{eqnarray*}
\mathcal{E}(u,v)&=&\frac{1}{2}\eta(u,v)+\int\hskip-0.2cm\int_{x\neq
y}(u(x)-u(y))(v(x)-v(y))k_a(x,y)m(dx)m(dy).
\end{eqnarray*}
Then, there exists $\alpha>0$ such that
$(\mathcal{E}_{\alpha},C^{lip}_0(E))$ is closable on $L^2(E;dx)$
and its closure $(\mathcal{E}_{\alpha},D(\mathcal{E}_{\alpha}))$
is a regular semi-Dirichlet form on $L^2(E,dx)$. Moreover, there
exists $C>1$ such that for any $u\in D(\mathcal{E}_{\alpha})$,
\begin{eqnarray*}
\frac{1}{C}\eta_{\alpha}(u,u)\leq\mathcal{E}_{\alpha}(u,u)\leq
C\eta_{\alpha}(u,u).
\end{eqnarray*}
 Let $X$ be the pure jump process associated with $(\mathcal{E}_{\alpha},D(\mathcal{E}_{\alpha}))$.
 For $u\in D(\mathcal{E}_{\alpha})_{loc}$, we have the decomposition (\ref{new3}) and It\^{o}'s formula (\ref{Ito1}).
\end{exa}

Finally, we consider a general semi-Dirichlet form with diffusion,
jumping and killing terms.
\begin{exa}(See \cite{uemura})
Let $G$ be an open set of $\mathbb{R}^d$. Suppose that the
following conditions hold:

\noindent (C.I) There exist $0<\lambda\le \Lambda$ such that
$$
\lambda|\xi|^2\le \sum_{i,j=1}^da_{ij}(x)\xi_i\xi_j\le
\Lambda|\xi|^2\ \ {\rm for}\ x\in G,\ \xi\in\mathbb{R}^d.
$$

\noindent (C.II) $b_i\in L^{d}(G;dx)$, $i=1,\dots,d$.

\noindent (C.III) $c\in L^{d/2}_+(G;dx)$.

\noindent (C.IV)
$x\mapsto\int_{y\not=x}(1\wedge|x-y|^2)k_s(x,y)dy\in
L^1_{loc}(G;dx)$.

\noindent (C.V) $ \sup_{x\in G}\int_{\{|x-y|\ge 1,y\in
G\}}|k_a(x,y)|dy<\infty$, $ \sup_{x\in G}\int_{\{|x-y|< 1,y\in
G\}}|k_a(x,y)|^{\gamma}dy<\infty$ for some $0<\gamma\le 1$, and
$|k_a(x,y)|^{2-\gamma}\le C_1k_s(x,y)$, $x,y\in G$ with
$0<|x-y|<1$ for some constant $C_1>0$.

Define for $u,v\in C^1_{0}(G)$,
\begin{eqnarray*}
\eta(u,v)&=&\frac{1}{2}\sum_{i=1}^d\int_G\frac{\partial
u}{\partial x_i}(x)\frac{\partial v}{\partial
x_i}(x)dx\\
& &\ \ \ \
+\frac{1}{2}\int\hskip-0.2cm\int_{x\not=y}(u(x)-u(y))(v(x)-v(y))k_s(x,y)dxdy
\end{eqnarray*}
and
\begin{eqnarray*}
{\cal
E}(u,v)&=&\frac{1}{2}\sum_{i=1}^d\int_Ga_{ij}(x)\frac{\partial
u}{\partial x_i}(x)\frac{\partial v}{\partial
x_j}(x)dx+\sum_{i=1}^d\int_Gb_{i}(x)u(x)\frac{\partial v}{\partial
x_i}(x)dx\\
& &\ \ \ \ +\int_Gu(x)v(x)c(x)dx\\
& &\ \ \ \ +
\frac{1}{2}\int\hskip-0.2cm\int_{x\not=y}(u(x)-u(y))(v(x)-v(y))k_s(x,y)dxdy\\
& &\ \ \ \
+\int\hskip-0.2cm\int_{x\not=y}(u(x)-u(y))v(x)k_a(x,y)dxdy.
\end{eqnarray*}
Then, when $\lambda$ is sufficiently large, there exists
$\alpha>0$ such that $({\cal E}_{\alpha},C^1_{0}(G))$ is closable
on $L^2(G;dx)$ and its closure $({\cal E}_{\alpha},D({\cal
E}_{\alpha}))$ is a regular semi-Dirichlet form on $L^2(G;dx)$.
Moreover, there exists $C'>1$ such that for any $u\in
D(\mathcal{E}_{\alpha})$,
\begin{eqnarray*}
\frac{1}{C'}\eta_{\alpha}(u,u)\leq {\cal E}_{\alpha}(u,u)\leq
C'\eta_{\alpha}(u,u).
\end{eqnarray*}
 Let $X$ be the Markov process associated with $(\mathcal{E}_{\alpha},D(\mathcal{E}_{\alpha}))$. For $u\in D(\mathcal{E}_{\alpha})_{loc}$,
 we have the decomposition (\ref{new3}) and It\^{o}'s formula (\ref{Ito1}).
\end{exa}
\bigskip

{ \noindent {\bf\large Acknowledgments} \vskip 0.1cm  \noindent We
acknowledge the support of NSFC (Grant No. 11361021 and 11201102),
Natural Science Foundation of Hainan Province (Grant No. 113007)
and NSERC (Grant No. 311945-2013).} We thank Professor Zhi-Ming Ma
and Dr. Li-Fei Wang for discussions that improved the presentation
of this paper.

\end{document}